\newcommand{\union}[2]{\mbox{$#1\!\cup\!#2$}}
\newtheorem{observation}{Observation}
\newtheorem{theorem}{Theorem}[section]
\newtheorem{proposition}[theorem]{Proposition}
\newtheorem{lemma}[theorem]{Lemma}
\newtheorem{corollary}[theorem]{Corollary}
\newcommand{\qed}{\hspace*{\fill} $\square$  \ifmmode \else
    \par\addvspace\topsep\fi}
\newenvironment {proof}{\par\addvspace\topsep\noindent{\it Proof.}
    \ignorespaces }{\qed}
\newcommand{\claimqed}{\hspace*{\fill} $\triangle$  \ifmmode \else
    \par\addvspace\topsep\fi}
\begin{document}

\title{Flips in Combinatorial Pointed Pseudo-Triangulations\\
with Face Degree at most Four}

\author[1]{Oswin Aichholzer}
\author[1]{Thomas Hackl}
\author[2]{David Orden}
\author[1]{Alexander Pilz}
\author[3]{Maria Saumell}
\author[1]{Birgit~Vogtenhuber}

\affil[1]{
Institute for Software Technology, Graz University of Technology.
Inffeldgasse 16b/II, Graz, Austria\\
\texttt{[oaich|thackl|apilz|bvogt]@ist.tugraz.at}
}
\affil[2]{
Departamento de F\'{\i}sica y Matem\'aticas, Universidad de Alcal\'a.
Apdo. de Correos 20, E-28871. Alcal\'a de Henares, Spain\\
\texttt{david.orden@uah.es}
}
\affil[3]{
Department of Mathematics and European Centre of Excellence NTIS (New Technologies for the Information Society), University of West Bohemia,\\
Univerzitn\'{\i} 22, 306 14 Plze\v{n}, Czech Republic\\
\texttt{saumell@kma.zcu.cz}
}

\date{}
\maketitle

\begin{abstract}
In this paper we consider the flip operation for combinatorial pointed pseudo-triangulations where faces
have size~3 or~4, so-called \emph{combinatorial 4-PPTs}.
We show that every combinatorial 4-PPT is stretchable to a geometric pseudo-triangulation, which in general is not the case if faces may have size larger than 4.
Moreover, we prove that the flip graph of combinatorial 4-PPTs is connected and has dia\/meter $O(n^2)$, even in the case of labeled vertices with fixed outer face.
For this case we provide an $\Omega(n\log n)$ lower bound.
\end{abstract}

\section{Introduction}
\label{sec:Introduction}

Given a graph of a certain class, a \emph{flip} is the operation of
removing one edge and inserting a different one such that the
resulting graph is again of the same class.
An example of such a class is the class of maximal
planar (simple) graphs, also called \emph{combinatorial triangulations}, where any combinatorial embedding (clockwise order
of edges around each vertex) has faces only of size~$3$.
Flips in combinatorial
triangulations remove the common edge of two triangular faces and
replace it by the edge between the two vertices not shared by the
faces, provided that these two vertices were not already joined by an
edge.  Combinatorial triangulations have a geometric counterpart in
triangulations of point sets in the plane, which are maximal plane
geometric (straight-line) graphs with predefined vertex positions.  In
this geometric setting there is also a flip operation, for which a
different restriction applies: An edge can be flipped if and only if
the two adjacent triangles form a convex quadrilateral (otherwise the
new edge would create a crossing).

Flips in (combinatorial) triangulations have been thoroughly studied.
See the survey by Bose and Hurtado~\cite{flip_survey}.
A prominent question about flips is to study the \emph{flip graph}.
This is an abstract graph whose vertices are the members of a given graph class having the same number of vertices,  and in which two graphs are neighbors if and only if one can be transformed  into the other by a single flip.
For both triangulations and combinatorial triangulations 
the flip graph is connected.
Lawson~\cite{lawson_connected} showed that the flip graph of triangulations of a point set is connected with quadratic diameter, which was later shown to be tight~\cite{hurtado_noy_urrutia}.
For combinatorial triangulations there are actually two classes to consider: those of labeled and unlabeled graphs, where in the latter class no two distinct elements are isomorphic.
For unlabeled combinatorial triangulations on $n$ vertices Wagner~\cite{wagner} proved connectedness of the flip graph, and Komuro~\cite{komuro} showed its diameter to be~$\Theta(n)$.
For the labeled setting Sleator, Tarjan, and Thurston~\cite{labeled_triang} showed the diameter to be $\Theta(n \log n)$.

Triangulations have a natural generalization in pseudo-triangulations.
They have become a popular structure in Computational Geometry within the last two decades, with applications in, e.g., rigidity theory and motion planning.
See the survey by Rote, Santos, and Streinu~\cite{flip_survey}.
A \emph{pseudo-triangle} is a simple polygon in the plane with exactly three convex vertices (i.e., vertices whose interior angle is smaller than $\pi$).
A \emph{pseudo-triangulation}~$\mathcal{T}$ of a finite point set~$S$ in the plane is a partition of the convex hull of~$S$ into pseudo-triangles such that the union of the vertices of the pseudo-triangles is exactly~$S$.
Triangulations are a particular type of pseudo-triangulations, actually the ones with the maximum number of edges.
Those with the minimum number of edges are the so-called \emph{pointed pseudo-triangulations}, in which every vertex is \emph{pointed}, i.e., incident to a reflex angle (an angle larger than~$\pi$)~\cite{streinu}.

Flips can also be defined for the class of pseudo-triangulations of point sets in the plane.
The flip graph for general pseudo-triangulations is known to be connected~\cite{novel_type}, as well as the subgraph induced by pointed pseudo-triangulations~\cite{pt_connected}.
The currently best known bound on the diameter is $O(n \log n)$ for both flip graphs~\cite{novel_type,bereg}, where here and for the rest of the paper $n$ denotes the number of vertices.

In a pseudo-triangulation, the pseudo-triangles can have linear size.
Hence, in contrast to triangulations, the flip operation can no longer
be computed in constant time.
This fact led to the consideration of pseudo-triangulations in which
the size of the pseudo-triangles is bounded by a constant.
Kettner et al.~\cite{pt_degree_bound} showed that every point set admits a pointed pseudo-triangulation with face degree at most four  (except, maybe, for the outer face).
We call such a pointed pseudo-triangulation a \emph{4-PPT}.

On the one hand, 4-PPTs behave nicely for problems which are hard for general
pseudo-triangulations. For instance, they are always properly 3-colorable,
while 3-colorability is NP-complete to decide for general
pseudo-triangulations~\cite{3color_pt}.
On the other hand, known properties of general pseudo-triangulations
remain open for 4-PPTs. For instance, it is not known whether the
flip graph of 4-PPTs is connected, even for the basic case of a
triangular convex hull.

The aim of this paper is to make a step towards answering this last question,
by considering the combinatorial counterpart of 4-PPTs.

A \emph{combinatorial pseudo-triangulation}~\cite{osss-cpts-07} is
a combinatorial embedding of a planar simple graph in the plane together with
an assignment of tags \emph{reflex/convex} to its angles such that
\begin{inparaenum}[(1)]
\item every interior face has exactly three angles tagged convex,
\item all the angles of the outer face are tagged reflex, and
\item no vertex is incident to more than one reflex angle.
\end{inparaenum}
(These tags of the angles are called ``labels'' by Orden et al.~\cite{osss-cpts-07}, we use a different term to prevent confusion with the classic labels of the vertices.)

Note that the assignment of these tags fulfills the same properties
as actual reflex/convex angles in a (geometric) pseudo-triangulation.
This analogy with the geometric case goes on by calling \emph{pointed}
vertices in a combinatorial pseudo-triangulation
those which are indeed incident to one angle tagged reflex.
Then, \emph{combinatorial pointed pseudo-triangulations} are those
in which every vertex is pointed. 
Combinatorial pointed pseudo-triangulations with face degree
at most four (except, maybe, for the outer face),
will be called \emph{combinatorial 4-PPTs}.

As it has been done for combinatorial triangulations, we consider flip graph connectivity of the labeled and unlabeled graph;
while we allow the outer face (predefined by the combinatorial embedding in the plane) to have an arbitrary number of vertices, we require these vertices to be the same in the source and the target graph.

\section{Properties}
\label{sec:PropertiesC4PPTs}

In this section, we prove some properties of combinatorial 4-PPTs and, in particular, we show that every combinatorial 4-PPT is stretchable to a geometric pseudo-triangulation.

\begin{lemma}
\label{lemma:corners-genLaman}
Let $T$ be a combinatorial 4-PPT and~$H$ be a subgraph of~$T$ with~$|V(H)|\geq 3$.
Then~$H$ has at least~$3$ vertices whose reflex angle is contained
in the outer face of~$H$ (called ``corners of first type'' by Orden et al.~\cite{osss-cpts-07}).
\end{lemma}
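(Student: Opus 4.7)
My plan is an angle-counting argument combined with Euler's formula, leveraging two structural features of $T$: every interior $T$-face has at most $4$ corners (with exactly $3$ tagged convex and at most $1$ tagged reflex), and every vertex of $T$ carries exactly one reflex tag (the pointed condition).

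The key structural observation is that $H$ inherits a face structure from $T$: each face of $H$ is a union of faces of $T$ merged along those $T$-edges which have been removed to obtain $H$. In particular, the outer face of $H$ contains the outer face of $T$ and absorbs precisely those interior $T$-faces whose separating edges get removed. Consequently, any vertex of $V(H)$ lying on the outer boundary of $T$ has its reflex tag (already in the outer face of $T$ by the tagging axioms) still in the outer face of $H$. Since the outer boundary of $T$ is a cycle of length at least $3$, the lemma follows immediately whenever $V(H)$ retains at least three vertices of the outer boundary of $T$; this in particular settles every edge-subgraph of~$T$.

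The difficult case is when fewer than three outer-boundary vertices of $T$ survive in $V(H)$. I would handle it by Euler's formula applied to the sub-plane-graph of $T$ restricted to the region occupied by the outer face of $H$. Let $k$ be the number of interior $T$-faces absorbed into the outer face of $H$, $e$ the number of $T$-edges lying within this region but absent from $H$, and $I$ the number of isolated $V(H)$-vertices inside this region. Euler's formula yields a relation between $k$, $e$, $I$, and the total number of corners of the outer face of $H$. Combining this with the face-degree bound (each absorbed interior $T$-face contributes at most $4$ corners) and the merge computation (each removed edge identifies pairs of $T$-angles into one $H$-angle at each endpoint), I obtain an explicit identity that expresses $h$ as a linear combination of these quantities, which can then be bounded from below.

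The main obstacle is making the counting yield the target bound of $3$ in all sub-cases, especially when many outer-boundary vertices of $T$ have been deleted. The crucial leverage is the pointed condition: every vertex of $V(H)$ must have its reflex tag placed in \emph{some} face of $H$, while the per-face budget of at most one reflex tag in a $4$-PPT, together with the $3$-convex-per-face rule, obstructs any attempt to confine all such tags to interior faces of~$H$. Making this forcing precise via the sparsity inequality $m(H) \leq 2|V(H)|-3$ inherited from the Laman property of pointed pseudo-triangulations, combined with the Euler relation described above, is the crux of the proof.
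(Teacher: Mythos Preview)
Your plan has a circularity problem. In the final step you invoke the sparsity inequality $m(H)\le 2|V(H)|-3$ ``inherited from the Laman property of pointed pseudo-triangulations'', but in this paper the generalized Laman property for combinatorial 4-PPTs is not an assumption---it is a \emph{consequence} of the very lemma you are proving (via Proposition~\ref{prop:equiv-prop}, leading to Theorem~\ref{theorem:stretched}). For arbitrary combinatorial pointed pseudo-triangulations the Laman property can fail (cf.\ \figurename~\ref{fig:NonStretchable5CPPT}, right), so you cannot take it for granted for $T$ before this lemma is established.

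Independently of that, your case split on how many outer-boundary vertices of $T$ survive in $V(H)$ is a detour, and the ``difficult case'' with its parameters $k,e,I$ is never made concrete. The paper avoids all of this with one structural reduction you are missing: replace $H$ by the maximal subgraph $H'$ of $T$ having the same outer face as $H$. Since $H$ and $H'$ share their outer face, the corners of first type coincide, so it suffices to treat $H'$; and now every interior face of $H'$ is an interior face of $T$, hence has size $3$ or $4$. A straightforward double-count of angles ($2e=3t+3q+c+n$, with $t,q$ the numbers of triangular and quadrangular interior faces, $b$ the boundary angles, $c$ those tagged convex, and $n$ the total number of reflex angles) combined with Euler's formula ($e=n+t+q-1$) and the identity $n=(b-c)+q$ yields $b-c=t+2+2c\ge 3$ directly, with no appeal to Laman and no case analysis.
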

\begin{proof}
W.l.o.g., we may assume that $H$ consists of a single connected component.
Let $H'$ be the maximal subgraph of $T$ that has the same outer face as $H$.
Hence, if the claim holds for $H'$ it also holds for $H$, and we only need to consider inner faces of size~3 or~4.
For the subgraph $H'$, let us denote with $n$ the number of vertices, $e$ the number of edges, $t$ the number of inner faces of size~$3$, $q$ the number of inner faces of size~$4$, $b$ the number of boundary angles and $c$ the number of convex boundary angles in the outer face of~$H'$.
Note that $b \geq 3$ and that $b > n$ is possible.

Let us double-count the edges.
On the one hand, the number of angles equals twice the number of edges; since there are~$n$ reflex angles and $3t+3q+c$ convex angles, we get that
$
2e=3t+3q+c+n.
$
On the other hand, from Euler's formula we have
$
e=n+t+q-1.
$
Eliminating~$e$ from these two equations, we get that the number of reflex angles is
$
n=t+q+2+c.
$
Now we can express the number~$n$ of reflex angles as $b-c+q$, to get that
$
b-c=t+2+c,
$
which is at least~$3$ if $c>0$. Either in this case or if $c=0$, we get that $b-c\geq 3$, as desired.
\end{proof}

\begin{corollary}
\label{corol:number_triangles}
In any combinatorial 4-PPT of the interior of a simple cycle with
$b$ vertices, of which $c$ have the reflex angle inside the cycle,
the number~$t$ of triangular faces is given by $t = b - 2c - 2$.
\end{corollary}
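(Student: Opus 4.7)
The plan is to observe that the corollary follows immediately from the counting performed inside the proof of Lemma~\ref{lemma:corners-genLaman}, applied here with the entire combinatorial 4-PPT of the cycle's interior playing the role of $H'$ in that proof. In the middle of the lemma's argument, the identity $b - c = t + 2 + c$ appears explicitly, and rearranging it yields exactly $t = b - 2c - 2$. So the task reduces to verifying that the setting of the corollary really is a specialization of the setting of the lemma and then quoting (or re-deriving) that intermediate identity.

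To match the parameters cleanly, I would first note that the outer face here is bounded by a simple cycle, so the number of outer-face angles equals the number $b$ of boundary vertices, with exactly one outer-face angle at each boundary vertex. A boundary vertex has its unique reflex angle inside the cycle if and only if its outer-face angle is tagged convex, so the parameter $c$ of the corollary (boundary vertices with reflex angle inside) coincides with the parameter $c$ used in the lemma (convex outer-face angles). Under this identification I would reproduce the two counting relations of the lemma's proof: double-counting angles gives $2e = 3t + 3q + c + n$ (using that by condition~(1) each inner face has exactly three convex angles, and that the total number of reflex angles equals the number $n$ of vertices), while Euler's formula gives $e = n + t + q - 1$. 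Eliminating $e$ produces $n = t + q + 2 + c$.

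The last step is to compute $n$ a second way by locating the reflex angles: exactly $b - c$ of them lie in the outer face (one per boundary vertex whose outer-face angle is reflex), and $q$ of them lie in inner faces (one per quadrilateral, again by condition~(1)), so $n = b - c + q$. Equating the two expressions for $n$ gives $b - c + q = t + q + 2 + c$, which simplifies to $t = b - 2c - 2$. I do not foresee any genuine obstacle, since the corollary is essentially a restatement of an equation that appears inside the proof of Lemma~\ref{lemma:corners-genLaman}; the only point demanding care is the parameter identification between the two statements, which the simple-cycle hypothesis makes straightforward.
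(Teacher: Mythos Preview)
Your proposal is correct and matches the paper's approach exactly: the paper states the corollary without a separate proof, treating it as immediate from the identity $b - c = t + 2 + c$ derived in the proof of Lemma~\ref{lemma:corners-genLaman}, and your write-up simply makes explicit the parameter identification (in particular that the simple-cycle hypothesis ensures the number of outer-face angles equals the number $b$ of boundary vertices, so the lemma's $b$ and $c$ coincide with those of the corollary).
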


A combinatorial pseudo-triangulation has the \emph{generalized Laman property} if every subset of $x$ non-poin\-ted vertices and $y$ pointed vertices, where
$x+y\geq 2$,
induces a subgraph with at most $3x + 2y - 3$ edges.
Both this property and the number of reflex angles from Lemma~\ref{lemma:corners-genLaman} are related to the stretchability of a combinatorial pseudo-triangulation into a geometric one.
A face of a combinatorial pseudo-triangulation is called \emph{degenerate} if it contains edges which appear twice on the boundary of this face.
See \figurename~\ref{fig:NonStretchable5CPPT}~(left).
Note that in our setting this is equivalent to the definition by Orden et al.~\cite{osss-cpts-07} where a face is non-degenerate if the edges incident to it form a simple closed cycle.


\begin{proposition}[Orden et al.~\cite{osss-cpts-07}, Corollary~2]
\label{prop:equiv-prop}
The following properties are equivalent for a combinatorial pseudo-triangulation $G$:
\begin{enumerate}
 \item $G$ can be stretched to become a pseudo-triangulation.
 \item $G$ has the generalized Laman property.
 \item $G$ has no degenerate faces and every subgraph of $G$ with at least
 three vertices has at least three corners of first type.
\end{enumerate}
\end{proposition}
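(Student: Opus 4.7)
The plan is to establish the equivalence by closing the loop $(1)\Rightarrow(3)\Rightarrow(2)\Rightarrow(1)$, which I find more economical than proving three separate bi-implications.

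For $(1)\Rightarrow(3)$: suppose $G$ is realized as a geometric pseudo-triangulation. Every face is then a simple polygon in the plane, so by definition no face is degenerate. Given a subgraph $H$ with at least three vertices, the convex hull of $V(H)$ has at least three vertices, and at every such hull vertex the unique angle of $H$ that faces the exterior of the hull is at least $\pi$. Hence the reflex angle assigned to that vertex (if it is pointed in $H$) lies in the outer face of $H$, producing the required three corners of first type.

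For $(3)\Rightarrow(2)$: I would imitate the angle-edge double count from the proof of Lemma~\ref{lemma:corners-genLaman}, now in the mixed setting. Let a subgraph $H$ have $x$ non-pointed and $y$ pointed vertices. Each pointed vertex contributes exactly one reflex angle, so the total number of reflex angles is $y$. Using that no face of $H$ is degenerate, each interior face contributes exactly three convex angles as in a pseudo-triangle; counting $2e$ as the total number of angles and combining with Euler's formula eliminates the face count and gives $e\leq 3x+2y-3$. The role of the corner-of-first-type condition is to guarantee that the outer boundary of $H$ contributes at least three convex angles, which is exactly what prevents the inequality from degrading.

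Finally, $(2)\Rightarrow(1)$ is the direction I expect to be the main obstacle, since it requires actually producing a straight-line realization from purely combinatorial data. My approach would be an inductive Henneberg-style construction: show that the generalized Laman property forces the existence of a vertex of small degree (two or three) whose removal preserves the property, stretch the smaller combinatorial pseudo-triangulation by the induction hypothesis, and then reinsert the removed vertex inside an appropriate face or along an appropriate edge so that the resulting straight-line angles match the prescribed reflex/convex tags. The delicate steps are (i) proving that such a reducible vertex always exists, and (ii) showing that the reinsertion can always be carried out compatibly with the fixed combinatorial embedding and the tagging; both rest essentially on the edge-count bound $3x+2y-3$ applied to carefully chosen subgraphs around the vertex to be reinserted.
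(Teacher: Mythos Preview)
The paper does not supply its own proof of this proposition: it is quoted as Corollary~2 of Orden et al.~\cite{osss-cpts-07} and used as a black box, so there is nothing here to compare your argument against. Any assessment has to be of your sketch on its own merits.

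On those merits, the step $(3)\Rightarrow(2)$ has a genuine gap. You assert that ``each interior face [of $H$] contributes exactly three convex angles as in a pseudo-triangle,'' but $H$ is an arbitrary induced subgraph of $G$: its interior faces are typically unions of several faces of $G$ and carry no control on their number of convex angles, so the double count does not go through. (In the proof of Lemma~\ref{lemma:corners-genLaman} this obstacle is sidestepped by passing to the maximal subgraph $H'$ with the same outer boundary, so that interior faces of $H'$ \emph{are} faces of the ambient graph; but that manoeuvre yields information about the outer face, not the edge bound for $H$ itself.) You also invert the role of the corner condition: having at least three corners of first type means at least three \emph{reflex} angles lie in the outer face, i.e.\ $b-c\geq 3$ in the notation of Lemma~\ref{lemma:corners-genLaman}, not that the boundary contributes at least three \emph{convex} angles. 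For $(1)\Rightarrow(3)$ your convex-hull argument needs care when a hull vertex of $V(H)$ is non-pointed in $G$, since such a vertex has no reflex angle and so cannot be a corner of first type under the definition given here. Finally, you correctly flag $(2)\Rightarrow(1)$ as the substantive direction; the Henneberg-style outline is plausible in shape, but the sketch does not yet indicate how the reinsertion is made compatible with the fixed combinatorial embedding and the prescribed reflex/convex tagging, which is where the actual content lies.
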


Since, by definition, combinatorial 4-PPTs have no degenerate
faces, we can use Proposition~\ref{prop:equiv-prop} to conclude
the following.

\begin{theorem}
\label{theorem:stretched}
Every combinatorial 4-PPT can be stretched to become a 4-PPT with the given assignment of angles.
Furthermore, combinatorial 4-PPTs have the generalized Laman property.
\end{theorem}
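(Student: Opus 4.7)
The plan is to deduce both statements directly from Proposition~\ref{prop:equiv-prop} by verifying its third characterization for combinatorial 4-PPTs, and then observing that stretching preserves the combinatorial data (face sizes, angle tags, pointedness).

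First I would handle the "no degenerate faces" condition. By definition, every inner face of a combinatorial 4-PPT has size~3 or~4 (and the outer face has all angles tagged reflex). A degenerate face would contain an edge traversed twice on its boundary walk, which forces the presence of a pendant structure whose deletion would violate the angle-tag axioms (e.g., two convex angles at the tip of a pendant inside a triangular face is impossible under axioms~(1)--(3) for faces of size~3 or~4). Since the excerpt explicitly asserts that, by definition, combinatorial 4-PPTs have no degenerate faces, this step reduces to a remark: the bounded face degree together with the pseudo-triangulation axioms leaves no room for an edge to appear twice on a face boundary.

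Next I would verify the "three corners of first type" condition for \emph{every} subgraph with at least three vertices. This is exactly the content of Lemma~\ref{lemma:corners-genLaman}, which was proved earlier via the double-counting identity $b - c = t + 2 + c \geq 3$ on the maximal subgraph $H'$ sharing the outer face of $H$. Hence condition~(3) of Proposition~\ref{prop:equiv-prop} is satisfied, and by the equivalences (3)~$\Leftrightarrow$~(2)~$\Leftrightarrow$~(1), the combinatorial 4-PPT has the generalized Laman property and admits a stretching to a geometric pseudo-triangulation respecting the prescribed angle tags.

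It remains to argue that the stretching is actually a 4-PPT, not merely a pseudo-triangulation. Stretching preserves the combinatorial embedding, so the face sizes of the geometric realization coincide with those in the combinatorial 4-PPT and are therefore at most~$4$ for inner faces. The reflex/convex tags of the stretched realization match the given tags by Proposition~\ref{prop:equiv-prop}, and since every vertex was tagged with exactly one reflex angle (pointedness in the combinatorial sense), every vertex of the geometric realization is pointed. Hence the stretched object is a geometric 4-PPT with the prescribed angle assignment.

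The only real subtlety is the degeneracy step: one must be convinced that the face-degree bound of~$4$, combined with the pseudo-triangulation angle axioms, truly rules out degenerate faces. The rest is a direct application of the already-established Lemma~\ref{lemma:corners-genLaman} and Proposition~\ref{prop:equiv-prop}.
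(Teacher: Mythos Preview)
Your proposal is correct and follows essentially the same route as the paper: the paper's entire argument is the one-line remark that combinatorial 4-PPTs have no degenerate faces ``by definition'', so Lemma~\ref{lemma:corners-genLaman} verifies condition~(3) of Proposition~\ref{prop:equiv-prop} and the equivalences yield both stretchability and the generalized Laman property. Your added paragraph explaining why the stretched realization is in fact a 4-PPT (face sizes and pointedness are preserved) makes explicit something the paper leaves implicit, and your caution about the non-degeneracy step is well placed---the paper simply asserts it, while you correctly identify it as the only point needing a moment's thought.
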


Note that there exist non-stretchable combinatorial pointed pseudo-triangulations with faces of size at most~5.
See \figurename~\ref{fig:NonStretchable5CPPT}~(right).
There and in the forthcoming figures, circular arcs denote angles tagged as reflex.

\begin{figure}[htb]
\centering
\includegraphics{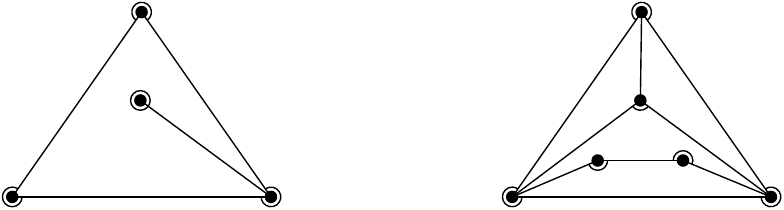}
\caption{Left: A degenerate 5-face. Right: A non-stretchable combinatorial pointed pseudo-triangulation\protect~\cite{osss-cpts-07}.}
\label{fig:NonStretchable5CPPT}
\vspace{-1ex}
\end{figure}

\section{Flips}
\label{sec:flips}

Before defining flips between combinatorial 4-PPTs, we make some observations about their geometric counterpart.
For good visual distinction, we draw the edges of non-geometric graphs as non-straight Jordan arcs throughout this section.

\begin{figure}[htb]
\centering
\includegraphics{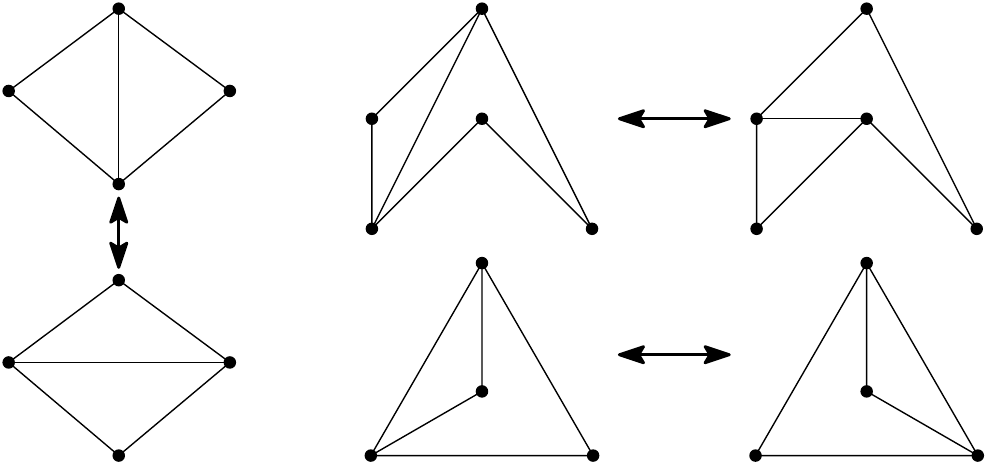}
\caption{Geometric flip of an edge of a triangle. Left: Both faces are triangles. Right: One face is a quadrilateral and in the lower case the removal of the flipped edge gives a degenerate 5-face.}
\label{fig:GeometricFlip}
\vspace{-2ex}
\end{figure}

In geometric 4-PPTs every edge of a triangle (except for those being part of the convex hull) is flippable~\cite{pt_survey}.
Consider flipping an edge~$e$ which separates a triangle~$\triangle$ from another face~$F$ in a geometric 4-PPT.
If $F$ is also a triangle, then removing $e$ and inserting the other diagonal $e'$ of the convex 4-face $\triangle \cup F$ is the well known ``Lawson flip".
(Note that $\triangle \cup F$ has to be convex because of the pointedness of the 4-PPT.)
If $F$ is a 4-face, then the removal of $e$ merges $\triangle$ and $F$ into a 5-face, which might be degenerate if $\triangle$ and $F$ share two edges.
Note that this degenerate case is the only one in which $\triangle$ and $F$ can share three vertices, as there are no multiple edges in geometric graphs.
See \figurename~\ref{fig:GeometricFlip}.

Similar to the geometric case, we consider flips of an interior edge $e$ of an interior triangular face $\triangle$ in a combinatorial 4-PPT:
Consider the face~$F$, triangular or quadrangular, sharing~$e$ with~$\triangle$.
A \emph{flip} of~$e$ consists in replacing~$e$ by another edge~$e'$ such that
\begin{inparaenum}[(1)]
  \item $e'$ splits $\union{(\triangle}{F)}\setminus e$ into a triangular face~$\triangle'$ and a face~$F'$, triangular or quadrangular, respectively, and
  \item the result is a combinatorial 4-PPT.
\end{inparaenum}
In particular, and in contrast to the geometric case, we have to explicitly avoid multiple edges in the combinatorial setting.
Hence, we have to ensure that the edge $e'$ that is inserted by the flip is not already contained in the combinatorial 4-PPT (as an edge outside $\union{\triangle}{F}$).
To emphasize that an exchange of two edges is a flip avoiding multiple edges, we sometimes
call a flip \emph{valid}.
Further, to highlight that an exchange of two edges which would locally (inside $\union{\triangle}{F}$) be a flip would introduce multiple edges, we call this an \emph{invalid} flip.
Recall though, that a flip is defined to be valid and we use this distinction only for emphasis in situations where we prove the existence of certain flips.

Observe that, in a (combinatorial) 4-PPT, if one face involved in a flip is triangular, then after the flip no face can have more than four vertices.
Thus, we restrict ourselves to flips where at least one involved face is triangular.
The following lemma shows that every interior edge of an interior triangular face can be flipped.

\begin{lemma}
\label{lemma:existanceofflips}
In a combinatorial 4-PPT, every edge~$e$ of an interior triangular face that is not an edge of the outer face is flippable.
Furthermore:
\begin{inparaenum}[(1)]
  \item If the removal of~$e$ results in a 4-face or a degenerate 5-face, then there is a unique valid flip for~$e$.
  \item If removing~$e$ results in a non-degenerate 5-face, then there are at least two valid flips for~$e$.
\end{inparaenum}
\end{lemma}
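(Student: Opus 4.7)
The plan is to case-split by the type of the face $F$ sharing edge $e$ with $\triangle = abc$ (write $e = ab$), and further by whether the apex $c$ belongs to $F$ when $F$ is quadrangular. In each case I would identify the candidate edges $e'$ whose insertion after removing $e$ yields a valid combinatorial 4-PPT, and check (i) that the resulting angle tagging is consistent with pointedness, and (ii) that $e' \notin E(T)$. Condition (ii) will in every case follow from the generalized Laman property (Theorem~\ref{theorem:stretched}).

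If $F = abd$ is triangular, the merged region is a 4-face on the four distinct vertices $\{a,b,c,d\}$, and the unique non-trivial chord is the Lawson diagonal $e' = cd$. All angles inside $\triangle \cup F$ are convex before and after the flip, so every vertex keeps its (unchanged) reflex angle outside $\triangle \cup F$. The five edges of $\triangle \cup F$ already saturate the Laman bound $2 \cdot 4 - 3 = 5$ on these four vertices, so $cd \notin E(T)$. If instead $F$ is quadrangular with $c \in V(F)$, then $\triangle$ and $F$ share a second edge besides $e$, say $bc$. Because both face-sides of each shared edge at $b$ are occupied by $\triangle$ and $F$, the cyclic order of edges around $b$ has no room for any further edge at $b$, hence $\deg_T(b) = 2$. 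Removing $e$ makes $b$ a leaf inside the degenerate 5-face, so $e'$ must be incident to $b$; the only non-multi-edge option is $e' = bx$, where $x$ is the fourth vertex of $F$. Once more Laman on $V(\triangle \cup F)$ gives $bx \notin E(T)$, completing part~(1).

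For part~(2), assume $F$ is quadrangular with $c \notin V(F)$, so the merged face boundary is a simple 5-cycle on $\{a,b,c,u,v\}$ with five diagonals. One diagonal is $e$ itself, leaving four candidate flips, each producing a triangle $\triangle'$ whose non-diagonal ``tip'' vertex runs over exactly the four vertices of $F$. Let $r$ be the unique reflex vertex of $F$. I claim that a candidate preserves pointedness if and only if $r$ is not this tip: since the flip leaves tags outside $\triangle \cup F$ untouched and no vertex may have two reflex angles, the single reflex angle inside $\triangle \cup F$ must stay at $r$ after the flip, forcing $r$ to lie on the new quadrilateral $F'$. Conversely, placing the reflex tag at $r$ in $F'$ and convex elsewhere extends consistently: at each non-$r$ vertex of $\triangle'$ or $F'$ the relevant angles are convex, and the count of reflex angles outside $\triangle \cup F$ is unchanged. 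Thus exactly three of the four candidates preserve pointedness. Finally, the Laman bound $2\cdot 5 - 3 = 7$ on the five pentagon vertices, together with the six already-present edges (the pentagon plus $e$), forbids more than one of the four candidate diagonals from being in $E(T)$. Hence at least two of the three pointedness-preserving candidates avoid multiplicity and are valid flips.

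The main obstacle is the pointedness bookkeeping in the last case: one has to argue both that the candidate with $r$ as tip cannot be rescued by any alternative tagging, and that each of the three surviving candidates admits a consistent tagging on all angles of $\triangle'$ and $F'$. Both directions reduce to tracking reflex counts at each pentagon vertex, using the invariance of tags outside $\triangle \cup F$ and the face constraints (three convex angles per face, one reflex in the new quadrilateral).
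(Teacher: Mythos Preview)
Your argument is correct and, in part~(2), takes a genuinely different route from the paper.  For the 4-face and degenerate 5-face cases you essentially match the paper (you use the Laman bound where the paper invokes planarity directly; both work).  The divergence is in the non-degenerate 5-face case.

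The paper labels the pentagon so that the reflex vertex is $v_1$, then proves \emph{constructively} that any flip towards a diagonal incident to $v_1$ is valid: if $v_1v_3$ (say) already existed outside the pentagon, the 3-cycle $v_1v_2v_3$ would have at most two corners of first type, contradicting Lemma~\ref{lemma:corners-genLaman}.  This immediately handles the case where $e$ is not incident to $v_1$.  When $e$ is incident to $v_1$, the paper shows the two remaining non-$v_1$ diagonals cannot both pre-exist outside the pentagon, since they would cross.  Your approach instead counts: exactly three of the four candidate diagonals preserve pointedness (the one whose tip is the reflex vertex $r$ fails), and the Laman bound $2\cdot 5-3=7$ leaves room for at most one extra diagonal among the five pentagon vertices beyond the six already present, so at most one candidate is blocked by multiplicity.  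Hence at least $3-1=2$ valid flips.

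Both arguments are sound.  Yours is more uniform---Laman does all the multiplicity work---and the pointedness bookkeeping via the tip vertex is clean.  The paper's argument is more explicit about \emph{which} flips are guaranteed to be valid (those towards the reflex vertex), and this extra information is invoked later: the proof of Lemma~\ref{zzzlemma:cleartip} appeals directly to ``the proof of Lemma~\ref{lemma:existanceofflips} for the non-degenerate case,'' and the case analysis in Lemma~\ref{lemma:triangletochgeneral} leans on knowing where the guaranteed flips sit relative to the reflex angle.  So your route proves the lemma as stated, while the paper's route also supplies a reusable structural fact.

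One small omission: in the degenerate case you verify $bx\notin E(T)$ but do not check the angle tagging.  This is easy---since $\deg_T(b)=2$ and the angle at $b$ in $\triangle$ is convex, $b$ must be the reflex vertex of $F$, and after the flip $b$ lies on the new 4-face $F'$ (indeed on $e'=bx$), so placing the reflex tag at $b$ in $F'$ works---but it should be stated, as your own preamble promises to verify~(i) in every case.
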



\begin{proof}
Let $\triangle$ be a triangular face and let $F$ be the face that is separated from $\triangle$ by $e$.
If $F$ is also a triangular face, then $\triangle \cup F$ is a 4-face.
In this case exchanging~$e$ by the unique other diagonal of $\triangle \cup F$ is a valid flip.
See \figurename~\ref{fig:CombinatorialFlipsDegenerated5-face}~(left).
If $F$ is a face of size 4 we have to distinguish two cases.
The first case is when $\triangle \cup F$ is degenerate.
Then there is only one choice of~$e'$ in order to split $\triangle \cup F \setminus e$ as required.
Furthermore, the corresponding edge~$e'$ could not already be an edge, since it was not in the interior of~$\triangle \cup F$ and it cannot go through the exterior of~$\triangle \cup F$ because of planarity.
See \figurename~\ref{fig:CombinatorialFlipsDegenerated5-face}~(right).
Hence, this choice is always valid.

\begin{figure}[htb]
\begin{center}
\includegraphics{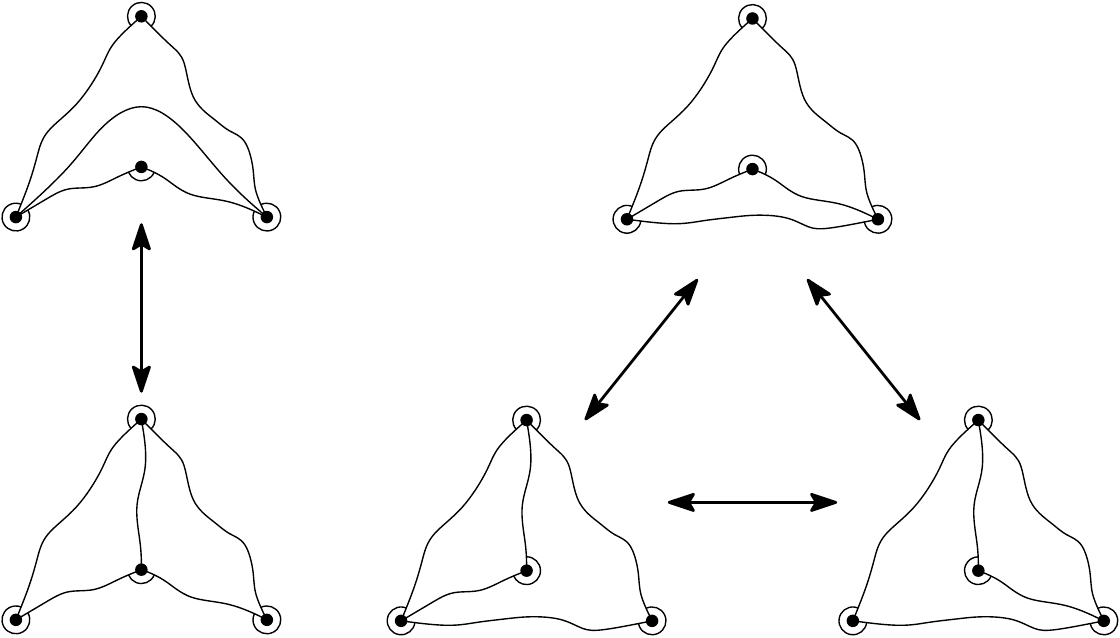}
\caption{Combinatorial flips for an edge of a triangular face. Left: Both faces are triangular. Right: One face is a quadrangular and the 5-face is degenerate.}
\label{fig:CombinatorialFlipsDegenerated5-face}
\end{center}
\end{figure}

The second case is when $\triangle \cup F$ is non-degenerate.
We show that flipping towards an edge using the reflex vertex is always valid.
See \figurename~\ref{fig:CombinatorialFlips}.
Denote by $v_1,\ldots,v_5$ the boundary vertices of~$\triangle \cup F \setminus e$, in counterclockwise order and with $v_1$ being the reflex vertex.
The edge~$e'$ we intend to insert is then either $v_1v_3$ or $v_1v_4$.
Let us focus on the first case, the other one being handled analogously.
If $e'=v_1v_3$ is not valid, there already has to be an edge between~$v_1$ and~$v_3$ in the exterior of~$\triangle \cup F$.
But then at most two vertices of the 3-cycle $v_1 v_2 v_3$ have their reflex angle on the outside of that cycle, by this contradicting Lemma~\ref{lemma:corners-genLaman}.
See \figurename~\ref{fig:GoodFlips}.

\begin{figure}[htb]
\begin{center}
\includegraphics{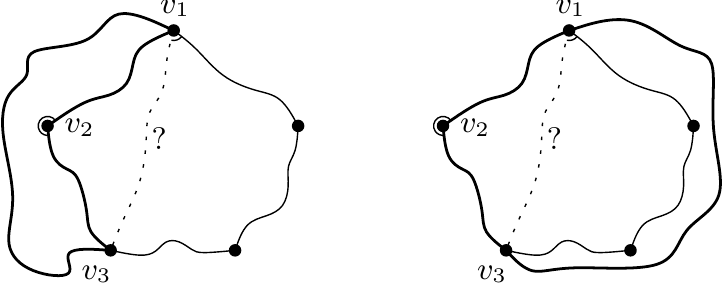}
\caption{Flipping towards an edge incident to the reflex vertex is always valid.}
\label{fig:GoodFlips}
\end{center}
\end{figure}

It remains to prove that in the non-degenerate case there are at least two valid flips for~$e$.
\figurename~\ref{fig:CombinatorialFlips} shows the possible flips when $\triangle \cup F$ is non-degenerate, with solid arrows indicating always valid flips and dotted arrows indicating flips which might be valid or not.

\begin{figure}[htb]
\begin{center}
\includegraphics{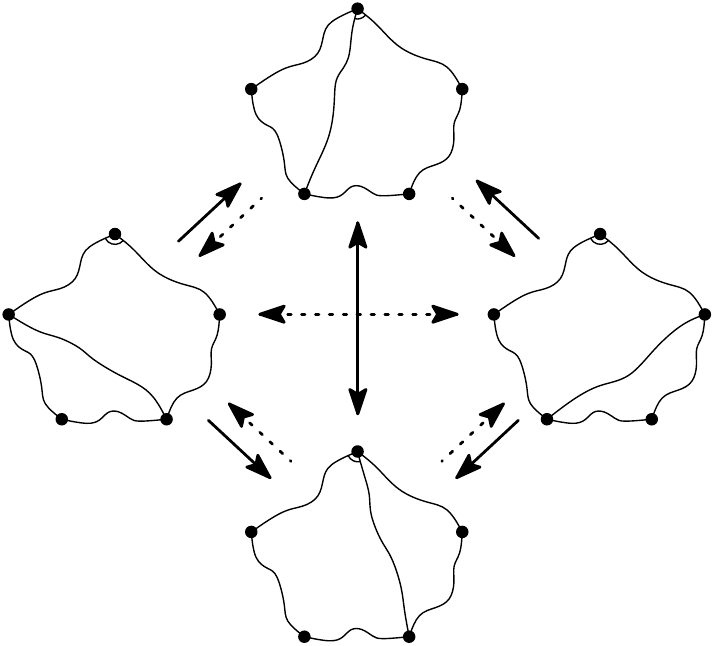}
\caption{Combinatorial flips for an edge of a triangular face in a combinatorial 4-PPT, non-degenerate case.}
\label{fig:CombinatorialFlips}
\end{center}
\end{figure}

If~$e$ is not incident to the reflex vertex, then there are two valid flips towards edges incident to that vertex.
If~$e$ is incident to the reflex vertex, there is always a valid flip towards the other diagonal~$e'$ incident to that vertex.
For a second valid flip, we show that the two remaining diagonals cannot simultaneously give invalid flips.
Let the edge to flip be $e=v_1v_3$ (the other case is analogous).

\begin{figure}[htb]
\begin{center}
\includegraphics{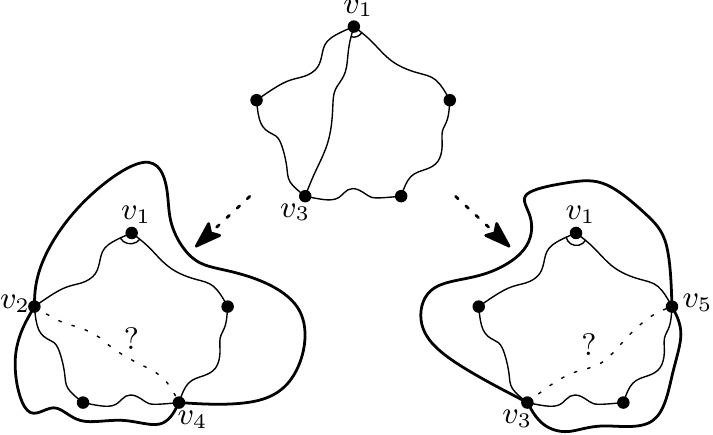}
\caption{In a non-degenerate 5-face, the two diagonals not using the reflex vertex cannot both give invalid flips.}
\label{fig:NotTwoBad}
\end{center}
\end{figure}

In order for both $v_2v_4$ and $v_3v_5$ to give invalid flips, the combinatorial 4-PPT must have both edges $v_2v_4$ and $v_3v_5$ in the exterior of the 5-face $v_1,\ldots,v_5$.
This is impossible since it would imply a crossing.
See \figurename~\ref{fig:NotTwoBad}.
\end{proof}

\begin{figure}[htb]
\begin{center}
\includegraphics{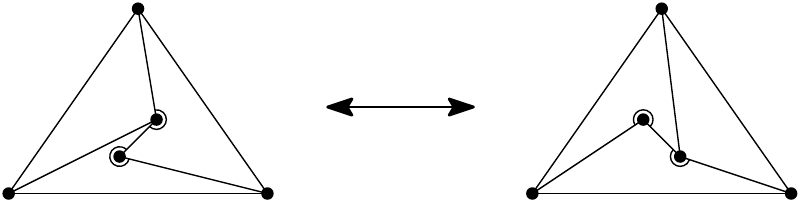}
\caption{Combinatorial flips might need different geometric embeddings.}
\label{fig:ObsGeomCombFlip}
\end{center}
\end{figure}

Observe that by Theorem~\ref{theorem:stretched}, every combinatorial 4-PPT can be stretched into a geometric 4-PPT.
Thus, the statement in Lemma~\ref{lemma:existanceofflips} that every interior edge of a triangular face is flippable can also be seen from the geometric case.
In contrast to the geometric case, where all valid flips are unique, the combinatorial case described in part~(2) of the lemma has up to three possible flips, of which there are always at least two valid ones.
This hints at another interesting observation.
Given a combinatorial flip between two combinatorial 4-PPTs, by Theorem~\ref{theorem:stretched} we know that both of them can be stretched into geometric 4-PPTs with straight edges.
However, it might not be possible to use the same geometric embedding for the vertices in both of them.
See \figurename~\ref{fig:ObsGeomCombFlip} for an example where two different geometric embeddings are required.

\section{Connectivity of the Flip Graph}
\label{sec:GraphOfFlips}

A usual approach for proving connectivity of the flip graph and bounding its diameter is to define a special canonical graph and to show that there exists a sequence of a certain (bounded) number of flips from any graph to the canonical one.
By the reversibility of the flips used in the sequence, this proves connectivity of the flip graph and gives a bound on its diameter.
In this section we deal with combinatorial 4-PPTs on unlabeled vertices with a fixed triangular outer face.
In later sections we will extend this base case to labeled vertices (Section~\ref{sec:labeledconnectedness}) and to the general case of combinatorial 4-PPTs with an arbitrarily sized outer face on unlabeled and labeled vertices (Section~\ref{sec:generalconnected}).

We define the unique \emph{canonical} combinatorial 4-PPT with triangular outer face to be the combinatorial 4-PPT where two of the vertices in the outer face are adjacent to all other vertices, while the third one has degree 2.
An example can be found in \figurename~\ref{fig:CanonicalSpinal}~(left).
Observe that this canonical combinatorial 4-PPT is indeed unique as we consider unlabeled vertices for now.
In the following we will show step by step how to build the flip sequence from any combinatorial 4-PPT to the canonical one.
We only allow flips of interior edges of interior triangular faces, as by Lemma~\ref{lemma:existanceofflips} these are always flippable.
For a combinatorial 4-PPT with triangular outer face, Corollary~\ref{corol:number_triangles} implies that there is only one interior triangular face.
Hence, with the presented flip sequences we will ``move" the single triangular face through the combinatorial 4-PPT.

Note that the following lemma is not restricted to a triangular outer face as the proof is (almost) the same for arbitrary sized outer faces, which will be needed also in Section~\ref{sec:generalconnected}.

\begin{lemma}
\label{lemma:triangletochgeneral}
Let~$T$ be a combinatorial 4-PPT with outer face $o_1,\ldots,o_h$, $h\geq3$.
For any edge~$b$ of the outer face, there is a sequence of $O(n)$ flips resulting in a combinatorial 4-PPT with one interior triangular face incident to~$b$.
\end{lemma}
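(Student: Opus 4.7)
My plan is a potential argument on the dual graph of the interior faces of $T$. If some triangular face of $T$ is already incident to $b$, the claim is trivial, so assume otherwise; the interior face $Q$ containing $b$ is then a quadrilateral. Applying Corollary~\ref{corol:number_triangles} to the outer boundary cycle, with $c = 0$ since every outer-cycle vertex has its reflex angle in the outer face, $T$ contains at least $h - 2 \geq 1$ triangular faces. Define the potential $\Phi(T)$ to be the length of a shortest path in the dual graph of the interior faces from $Q$ to some triangular face; as $T$ has $O(n)$ interior faces by Euler's formula, $\Phi(T) = O(n)$.

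The key step is to show that whenever $\Phi(T) > 0$, a single flip produces a 4-PPT $T'$ with $\Phi(T') \leq \Phi(T) - 1$; iterating then yields the desired $O(n)$ flip sequence. Fix a shortest dual path $Q = F_0, F_1, \dots, F_N = \triangle$ (so that $F_N$ is a triangle and $F_0, \dots, F_{N-1}$ are quadrilaterals) and let $e$ be an edge shared by $F_{N-1}$ and $F_N$. By Lemma~\ref{lemma:existanceofflips}, $e$ is flippable, and any valid flip of $e$ replaces the pentagonal region $R := (F_{N-1} \cup F_N) \setminus e$ by a new triangle $\triangle'$ and a new quadrilateral $F'$. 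Let $g$ denote the edge shared by $F_{N-1}$ and $F_{N-2}$ when $N \geq 2$, and set $g := b$ when $N = 1$; either way $g$ is a boundary edge of $R$. If we can choose the flip so that $g$ lies on the boundary of $\triangle'$, then $\triangle'$ becomes adjacent to $F_{N-2}$ (or, when $N = 1$, incident to $b$), reducing $\Phi$ by at least one.

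The core of the argument is the existence of such a flip, which I verify by case analysis using Lemma~\ref{lemma:existanceofflips}. In the non-degenerate case, label the vertices of $R$ cyclically as $v_1, \dots, v_5$ with $v_1$ the reflex vertex (which lies in $F_{N-1}$). The edge $e$ is one of the four diagonals $\{v_1 v_3, v_1 v_4, v_2 v_4, v_3 v_5\}$ (the diagonal $v_2 v_5$ would put a reflex angle inside a triangle and is excluded), and $g$ is one of the three boundary edges of $R$ that belong to $F_{N-1}$. For each combination, at least one valid flip produces a triangle $\triangle'$ with $g$ on its boundary: when some $v_1$-incident diagonal produces a triangle containing $g$, the always-valid flip to that diagonal works; in the remaining case $g = v_3 v_4$ (the edge of $R$ opposite $v_1$), both non-$v_1$-incident diagonal flips produce triangles containing $g$, and Lemma~\ref{lemma:existanceofflips} guarantees that at least one of them is valid. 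In the degenerate case, $F_{N-1}$ and $F_N$ share two edges and $F_{N-1}$ has exactly two non-shared edges, each a candidate position of $g$; flipping whichever of the two shared edges is vertex-disjoint from $g$ produces, via the unique valid flip of the degenerate pentagon, a new triangle containing $g$.

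The main obstacle is this case analysis, in particular the degenerate pentagon configuration, where only one flip is valid and we must identify the correct shared edge of $\triangle$ to flip based on where the next face $F_{N-2}$ attaches to $F_{N-1}$.
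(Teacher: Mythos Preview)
Your proposal is correct and follows essentially the same approach as the paper. Both arguments move an interior triangle toward~$b$ one step at a time along a dual path, performing the identical case analysis on the (possibly degenerate) pentagon $\triangle \cup F$: when the target boundary edge~$g$ is incident to the reflex vertex~$v_1$, an always-valid $v_1$-incident flip works; when~$g$ is the edge $v_3v_4$ opposite~$v_1$, at least one of the two non-$v_1$-incident flips is valid; and in the degenerate case one flips the shared edge that is vertex-disjoint from~$g$. The only cosmetic difference is that you phrase the iteration via a decreasing potential (shortest dual distance from the face at~$b$ to a triangle), whereas the paper fixes a dual path from some triangle to the outer face through~$b$ upfront and, if necessary, shortens it to start at the last triangle along that path.
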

\begin{proof}
Consider the dual of $T$ and choose a path $\triangle = F_0\to F_1\to F_2\to\cdots\to F_k=F_{\text{outer}}$ from an interior triangular face $\triangle$ to the outer face $F_{\text{outer}}$, such that $F_{k-1}$ and $F_{k}$ share the edge~$b$.
Let $e_j$, $1\leq j\leq k$, be an edge separating the faces $F_{j-1}$ and $F_{j}$ in the path (note that there might be two edges shared by two faces).
If there is a triangular face $F_{j}$ ($1\leq j\leq k-1$) in the path, then choose the triangular face $F_{j'}$ with highest index $j'$ among all triangular faces $F_{j}$, and replace $\triangle$ and the path in the dual of $T$ by $F_{j'}$ and the shortest (sub)path starting from this new~$\triangle$.
Note that this can only happen if $h>3$, as with a triangular outer face there exists only one triangle.

\begin{figure}[htb]
\centering
\includegraphics{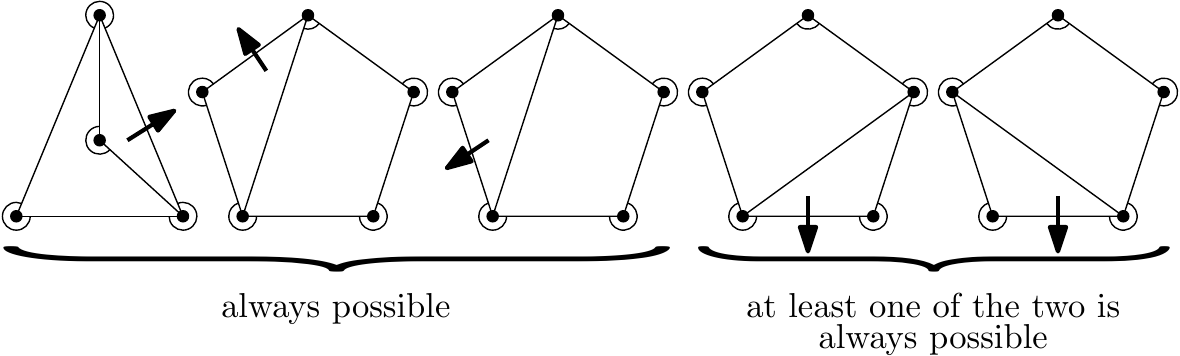}
\caption{The region $\triangle\cup F_j$ after the flip. The arrows indicate the edge $e_{j+1}$ to which the triangular face should be incident after the flip.
  Leftmost: $\triangle\cup F_j$ is degenerate.
  All but leftmost: The four different cases (depending on the relative position of $e_{j+1}$ and the reflex interior angle) for non-degenerate $\triangle\cup F_j$.
  The two rightmost 4-PPTs show cases with two possible flips, of which at most one can lead to a double edge.}
\label{fig:generalpathflip}
\end{figure}

We define the sequence of flips in such a way that after the $j$-th flip, $\triangle$ is incident to~$e_{j+1}$, which then separates $\triangle$ from~$F_{j+1}$.
Thus, after $k-1=O(n)$ flips $\triangle$ will be incident to $F_k=F_{\text{outer}}$ through~$e_{k}=b$, as required.

At the $j$-th flip we consider the region $\triangle\cup F_j$ and we have to replace an edge~$e$ shared by~$\triangle$ and~$F_j$ with a valid edge~$e'$, such that the new triangular face~$\triangle'$ is incident to~$e_{j+1}$.
If $\triangle\cup F_j$ is degenerate, then there are two edges shared by~$\triangle$ and~$F_j$.
For each of these edges there exists a unique valid flip, by Lemma~\ref{lemma:existanceofflips}.
Flipping the edge which does not share a vertex with~$e_{j+1}$ yields the desired result.
See \figurename~\ref{fig:generalpathflip} (left).
If $\triangle\cup F_j$ is non-degenerate, then the single edge shared by~$\triangle$ and~$F_j$ is $e_{j}$.
By Lemma~\ref{lemma:existanceofflips} there exist two valid flips for $e_{j}$.
At least one of these flips introduces an edge~$e'$, such that the new triangular face~$\triangle'$ is incident to~$e_{j+1}$.
See \figurename~\ref{fig:generalpathflip} (all but leftmost).
\end{proof}

Once the interior triangular face is incident to an edge~$b$ of the outer face, the next step will be flipping away interior edges incident to one endpoint of~$b$.

\begin{lemma}
\label{lemma:cleartip}
Given a combinatorial 4-PPT with triangular outer face in which the interior triangular face~$\triangle$ is incident to the edge~$b$ of the outer face, there is a sequence of flips resulting in a combinatorial 4-PPT in which the endpoint named~$t$ of~$b=rt$ has no interior incident edges.
\end{lemma}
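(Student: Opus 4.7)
My plan is to proceed by induction on the interior degree $d(t)$ of~$t$. If $d(t)=0$ there is nothing to do, so assume $d(t)\ge 1$. Since the unique triangular face~$\triangle$ is incident to $b=rt$, we may write $\triangle=rtx$; because $d(t)\ge 1$, the vertex $x$ differs from the third outer vertex, so $tx$ is an interior edge. Let $F$ be the face across $tx$ from~$\triangle$; by Corollary~\ref{corol:number_triangles} applied to the triangular outer face, $F$ must be a 4-face. Write its CCW boundary as $x,t,y,z$, so that $y$ is the other neighbor of $t$ in~$F$ and $z$ is opposite~$t$.

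The key step is to flip $tx$ while inserting an edge that avoids $t$, thereby strictly reducing $d(t)$. In the non-degenerate case, the four candidate diagonals of the merged 5-face $r,t,y,z,x$ are $ry$, $rz$, $tz$, and $xy$; only $tz$ is incident to~$t$. The reflex vertex $v^*$ of the merged region lies in $\{x,y,z\}$ since $r$ and $t$ carry their reflex angles on the outer face. By Lemma~\ref{lemma:existanceofflips}, any diagonal incident to $v^*$ is always valid, and in each sub-case at least one such diagonal avoids~$t$: if $v^*=y$ take $ry$, if $v^*=z$ take $rz$, if $v^*=x$ take $xy$. In the degenerate case the second shared edge of $\triangle$ and~$F$ must be~$rx$ (as the outer edge~$rt$ cannot be interior), so $F$ has vertex set $\{r,x,t,y\}$ and the unique valid flip, guaranteed by Lemma~\ref{lemma:existanceofflips}, inserts $xy$, which is again not incident to~$t$.

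After this single flip, $d(t)$ has dropped by one. If the new triangle is still at $b$---which happens exactly when $v^*=y$ (yielding $\triangle'=rty$)---we iterate directly. In the remaining sub-cases the new triangle sits at $rxz$, $xyz$, or $xry$ respectively, and I would invoke Lemma~\ref{lemma:triangletochgeneral} to move it back to be incident to $b$ via $O(n)$ further flips. The main obstacle is to verify that this relocation does not recreate interior edges at~$t$: I would address it by choosing the dual path from the new triangle to~$b$ so that a face incident to~$t$ is visited only in the very last step, and by applying the reflex-vertex analysis of Lemma~\ref{lemma:existanceofflips} to that final 5-face in order to select a flip that does not add an edge at~$t$. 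Repeating the whole round reduces $d(t)$ by one per iteration, and after at most $d(t)=O(n)$ iterations we reach the desired combinatorial 4-PPT in which $t$ has no interior incident edges.
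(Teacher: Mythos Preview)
Your reduction step is sound when the chosen flip keeps the triangle incident to~$b$ (your sub-case $v^*=y$), but there is a genuine gap in the remaining sub-cases, where the triangle moves to $rzx$, $xyz$, or (in the degenerate case with $d(t)\ge 2$) to $rxy$. You then invoke Lemma~\ref{lemma:triangletochgeneral} to bring it back to~$b$ and correctly flag the obstacle: this relocation may re-attach edges to~$t$. The proposed fix does not close the gap. First, you give no argument that a dual path reaching a $t$-incident face only at the last step exists. Second, even granting such a path, the final flip must produce a triangle $rtv$ for some~$v$, so the inserted diagonal is either $rv$ or~$tv$; you need $rv$ to be valid, but nothing rules out that $rv$ already exists while the reflex vertex of that last 5-face forces~$tv$. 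Concretely, after your degenerate-case flip the triangle is $rxy$ and the face at~$b$ is the 4-face $rtyx$; entering it through $rx$ or $xy$ forces the insertion of~$xt$, undoing the progress, so you are pushed into exactly the uncontrolled long detour. Since a single round is not shown to strictly decrease $d(t)$, the induction is not established.

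The paper sidesteps the relocation problem by never letting the triangle leave the neighborhood of~$t$. Rather than always flipping to a diagonal through the reflex vertex, it first tries to flip $tw_1$ to $w_0w_2$ (your~$ry$), which keeps the triangle at~$b$ and succeeds whenever $w_0w_2$ is absent---a strictly larger set of situations than your $v^*=y$. When that flip is blocked because $w_0w_2$ already exists, the 3-cycle $t\,w_0\,w_2$ encloses~$w_1$; the proof then switches to a second phase with the triangle at $tw_1w_2$, and the enclosure of~$w_1$ guarantees that the flip $tw_2 \to w_1w_3$ is always valid and removes an edge at~$t$. Iterating this second phase clears~$t$ with no appeal to Lemma~\ref{lemma:triangletochgeneral}.
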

\begin{proof}
We describe a flip sequence that removes all inner edges incident to $t$.
This flip sequence can be partitioned into two phases and some cases.
Let the vertices neighbored to the vertex $t$ be ordered radially around~$t$, starting with $r$.
In each case, let the vertices in that order be $r = w_0, \dots, w_k$.

\noindent \textbf{Phase~1:}
During this phase, the inner triangular face $\triangle$ has $rt$ as a side; i.e., $\triangle=trw_1$.
We distinguish three different cases:

\noindent \textbf{Case~1: $\bm{t w_1}$ is the only inner edge incident to~$\bm{t}$; i.e., $\bm{k=2}$.} 
If $\triangle$ is incident to only one 4-face $F$ (i.e., $\union{\triangle}{F}$ is degenerate), we can flip the edge $tw_1$ and we are done.
Otherwise, let the 4-face~$F$ incident to $t w_1$ be $t w_1 u w_2$.
See \figurename~\ref{fig:ClearTipPhase1Case1}. The reflex angle inside $F$ is either at~$u$ or~$w_1$.
If it is at~$u$, we flip $t w_1$ to $w_0 u$, obtaining the 4-face $t w_0 u w_2$.
Otherwise, the reflex angle is at~$w_1$ and we flip $t w_1$ to $w_1 w_2$, obtaining the 4-face $t w_0 w_1 w_2$.
Either way, the degree of $t$ is 2 and we are done.
\begin{figure}[htb]
\vspace{-0.5ex}
\centering
\includegraphics{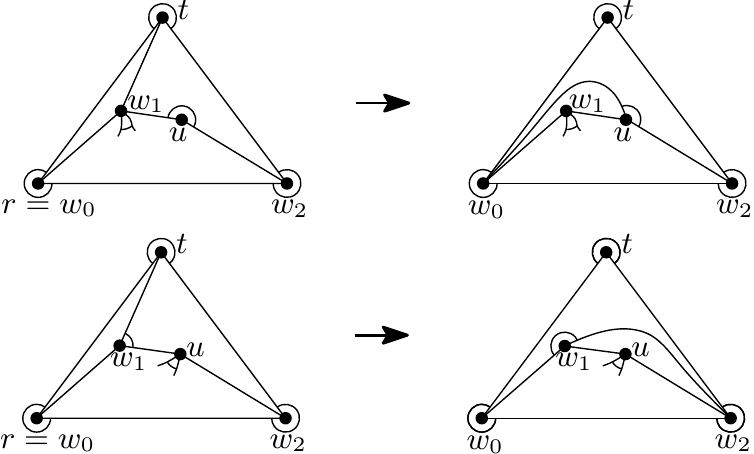}
\caption{Phase~1, Case~1: Only one interior edge is incident to~$t$.}
\label{fig:ClearTipPhase1Case1}
\vspace{-0.5ex}
\end{figure}

\noindent \textbf{Case~2: at least two inner edges are incident
to~$\bm{t}$ and there does not exist an edge $\bm{w_0 w_2}$.} See
\figurename~\ref{fig:ClearTipPhase1Case2}. Since the reflex angle
of $t$ is at the outer face we can replace the edge $t w_1$ by
$w_0 w_2$. This reduces the degree of $t$ by one. The inner
triangular face is again adjacent to $w_0t$, and we remain in
Phase~1.
\begin{figure}[htb]
\vspace{-0.5ex}
\centering
\includegraphics{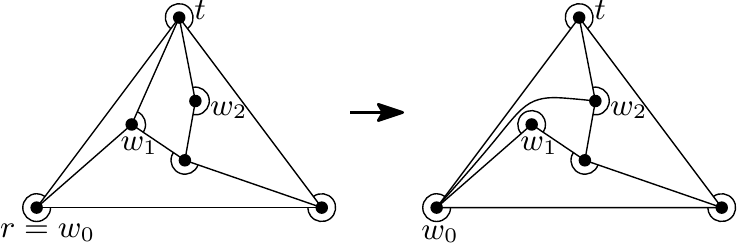}
\caption{Phase~1, Case~2: Several interior edges are incident to $t$ and ${w_0 w_2}$ does not exist.}
\label{fig:ClearTipPhase1Case2}
\vspace{-0.5ex}
\end{figure}

\noindent \textbf{Case~3: at least two inner edges are incident
to~$\bm{t}$ and there exists an edge $\bm{w_0 w_2}$.} See
\figurename~\ref{fig:ClearTipPhase1Case3}. If the two inner edges
of $\triangle$ are incident to a single 4-face, we have a
degenerate case; we flip the edge $w_0 w_1$ to $w_1 w_2$, making
$t w_1 w_2$ the inner triangular face. Otherwise, let the 4-face
$F$ incident to $t w_1$ be $t w_1 u w_2$; we flip $t w_1$ to $t u$
(this is possible since if $t u$ already existed, it would have to
cross the cycle $r w_2 u w_1$). Either way, the flip does not
reduce the degree of $t$, but the inner triangular face is now
inside the 3-cycle $t w_0 w_2$. We switch to Phase~2.
\begin{figure}[htb]
\centering
\includegraphics{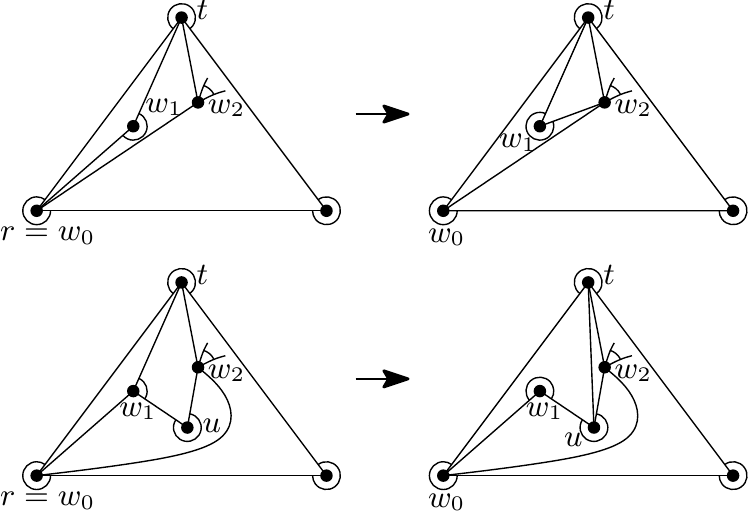}
\caption{Phase~1, Case~3: The possible transitions to Phase~2.}
\label{fig:ClearTipPhase1Case3}
\end{figure}

\noindent \textbf{Phase~2:}
During this phase, the inner triangular face is ${t w_1 w_2}$, and $w_1$ stays fixed for the whole phase.
Further, we know that $w_1$ was enclosed by a 3-cycle (at the transition to this phase), which implies that there are no edges from $w_1$ to $w_j$ for any $j \geq 2$.
We decrease the degree of $t$ in the following manner.

\noindent \textbf{Case~1: there is a 4-face $\bm{F}$ incident to $\bm{t w_2}$.}
There cannot be an edge $w_1 w_3$ since $w_1$ was enclosed by a 3-cycle.
Further, the reflex angle of $F$ is not at~$t$.
Hence, we can flip $t w_2$ to $w_1 w_3$, which reduces the degree of $t$ and we remain in Phase~2, with $t w_1 w_3$ being the new inner triangular face.

\noindent \textbf{Case~2: there is no 4-face incident to $\bm{t w_2}$; i.e., $\bm{k=2}$.} 
This case is symmetric to Case~1 of Phase~1.
The edge $t w_1$ is flipped in one of the two ways described, reducing the degree of~$t$ to~2 and thus ending the process.
\end{proof}

\begin{figure}[htb]
\centering
\includegraphics{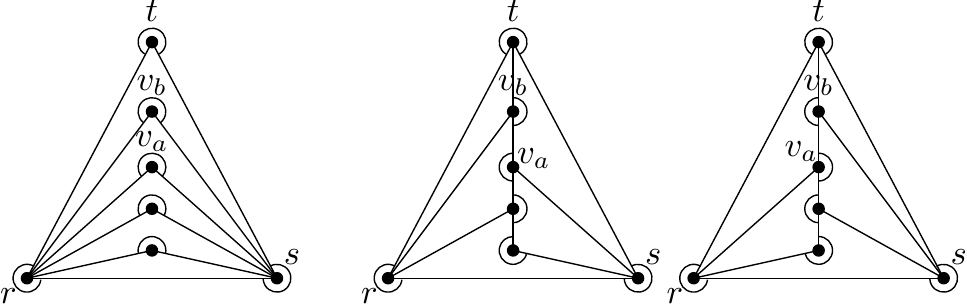}
\caption{Left: The canonical combinatorial 4-PPT $T$, with outer face $rst$ and base edge $rs$.
Middle and right: The two spinal combinatorial 4-PPTs (from $T$) with tip $t$, $s$-spinal (middle) and $r$-spinal (right).}
\label{fig:CanonicalSpinal}
\end{figure}

\begin{theorem}
\label{thm:connectedness}
The graph of flips in combinatorial 4-PPTs with $n$ vertices and triangular outer face is connected and has diameter $O(n^2)$.
\end{theorem}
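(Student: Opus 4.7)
The plan is to prove by induction on $n$ that any combinatorial 4-PPT on $n$ vertices with triangular outer face $rst$ can be transformed into the canonical one (where two designated vertices $r,s$ of the outer face are adjacent to all other vertices) using $O(n^2)$ flips. The base case $n=3$ is trivial since the 4-PPT consists only of the outer triangle. For the inductive step with $n\ge 4$, I would first invoke Lemma~\ref{lemma:triangletochgeneral} (with $b=rt$) to move the unique interior triangle (unique by Corollary~\ref{corol:number_triangles}) to be incident to $rt$, using $O(n)$ flips. Then I apply Lemma~\ref{lemma:cleartip} to reduce the degree of $t$ to $2$, again using $O(n)$ flips, since each flip either reduces the degree of $t$ (bounded above by $n$) or is one of at most two phase transitions. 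After these steps, $t$ is pendant, adjacent only to the outer edges $rt$ and $ts$.

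Next, I identify the interior face $F_t$ containing $t$. Because $t$ has degree $2$, $F_t$ must use both edges $rt,ts$ and hence contains the vertices $r,t,s$; $F_t$ cannot be a triangle (which would force $F_t=rst$, sharing all three edges with the outer face---impossible in a simple plane graph), so $F_t=rtsv_*$ is a quadrilateral for some interior vertex $v_*$. Since $r,s,t$ each already have their unique reflex angle in the outer face, the reflex angle of $F_t$ must sit at $v_*$. Deleting $t$ and the edges $rt,ts$ merges the outer face of $T$ with $F_t$ into a new triangular face on $\{r,s,v_*\}$, and I would verify that the resulting graph $T'$ is a combinatorial 4-PPT on $n-1$ vertices with triangular outer face $rsv_*$: all remaining faces of $T'$ are inherited from $T$ and so have size $3$ or $4$; the new outer angle at $v_*$ is reflex (inherited from $F_t$); and the merged outer angles at $r,s$ can be tagged reflex, consistent with pointedness (each vertex still has exactly one reflex angle).

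I would then invoke the induction hypothesis on $T'$, strengthened to target the specific canonical of $T'$ in which the two designated vertices $r,s$ remain the poles. This yields a sequence of at most $c(n-1)^2$ flips, carried out entirely on interior edges of $T'$---which are also interior edges of $T$ not incident to $t$---and which are therefore valid flips of $T$. After these flips, $r$ and $s$ are adjacent to every vertex of $V\setminus\{t\}$, and together with the edges $rt,ts$ they are adjacent to every vertex of $T$. All remaining vertices are settled (degree $2$ with neighbors $\{r,s\}$): this is inherited from the canonical of $T'$ for vertices of $V\setminus\{t\}$, and by construction for $t$. Hence $T$ is in canonical form after at most $O(n)+c(n-1)^2\le cn^2$ flips. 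Reversibility of flips then yields connectedness, and any two 4-PPTs are linked via their common canonical in $O(n^2)$ flips, giving the diameter bound.

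The main obstacle is the second step: verifying that deleting the settled tip $t$ yields a valid combinatorial 4-PPT $T'$ with triangular outer face. The key structural observation is that the reflex angle of the quadrilateral $F_t$ lies at $v_*$ (forced by the reflex angles of $r,s,t$ being already assigned to the outer face of $T$); without this, the merged outer face of $T'$ could not consistently be tagged all-reflex. A secondary subtlety is the strengthening of the induction hypothesis so that the recursion consistently leaves $r,s$ as the poles; otherwise the recursion might reach a representative canonical of $T'$ whose poles do not both lie in $\{r,s\}$, and adding $t$ back would then fail to yield the canonical of $T$ (the resulting graph would have the wrong degree sequence).
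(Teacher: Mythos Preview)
Your proposal is correct and follows essentially the same approach as the paper: apply Lemma~\ref{lemma:triangletochgeneral} and Lemma~\ref{lemma:cleartip} to reduce the tip~$t$ to degree~$2$, delete~$t$, and recurse. The paper's own proof is terser---it simply says ``use induction for the combinatorial 4-PPT obtained by removing~$t$''---whereas you spell out the structure of the quadrilateral $F_t=rtsv_*$, the placement of its reflex angle at~$v_*$, and the need to strengthen the induction hypothesis so that $r$ and~$s$ remain the poles throughout the recursion; these are exactly the details the paper leaves implicit.
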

\begin{proof}
Given such a combinatorial 4-PPT, follow the steps in Lemmas~\ref{lemma:triangletochgeneral} and~\ref{lemma:cleartip}, then use induction for the combinatorial 4-PPT obtained by removing~$t$.
This leads to the unique \emph{canonical} combinatorial 4-PPT with triangular outer face, where two of the vertices in the outer face are adjacent to all other vertices, while the third one has degree 2.
See \figurename~\ref{fig:CanonicalSpinal}~(left).


Furthermore, the number of flips needed in Lemmas~\ref{lemma:triangletochgeneral} and~\ref{lemma:cleartip} is
at most linear in the number of vertices of the combinatorial 4-PPT.
\end{proof}



\section{Connectivity for Labeled Combinatorial 4-PPTs}
\label{sec:labeledconnectedness}

The canonical graph produced in the proof of Theorem~\ref{thm:connectedness} does not care about the order of the interior vertices with respect to the extremal vertices.
When labeling the vertices of both the source and target graph accordingly, the two graphs produced are isomorphic and have the same combinatorial embedding, but might not be equivalent when arbitrary predefined labels are considered.
In this section we describe how to flip between canonical combinatorial 4-PPTs with a fixed triangular outer face under consideration of the labels.
In contrast to the classic setting of combinatorial triangulations, note that we insist on the outer face to be fixed, i.e., the vertices of the outer face occur in the same order (with the same order of labels) in all labeled combinatorial 4-PPTs of the flip sequence.

The canonical combinatorial 4-PPT, as exemplified in \figurename~\ref{fig:CanonicalSpinal}~(left), induces a total order on the interior vertices (i.e., vertices that are not incident to the outer face) by inclusion of vertices in 3-cycles formed by one interior vertex and the two vertices $r$ and $s$ of high degree.
Thus, given a canonical combinatorial 4-PPT, we say that an interior vertex $v_b$ is \emph{above} another interior vertex $v_a$ (and $v_a$ is \emph{below} $v_b$) if and only if the 3-cycle defined by $v_b$ (and $r$ and $s$) contains $v_a$ in its interior.
Further, two vertices $v_a$ and $v_b$ are \emph{neighbored} when they are neighbored in the total order.
%
Let $v_1,\ldots,v_i$ be the $i=n-3$ interior vertices in that order.

Besides the canonical form, a second special class of combinatorial 4-PPTs that we will use is the one of \emph{spinal} combinatorial 4-PPTs; see \figurename~\ref{fig:CanonicalSpinal}~(middle and right).
In a spinal combinatorial 4-PPT the subgraph on $\{v_1,\ldots,v_i\}\cup\{t\}$ is a path with $t$ and $v_1$ as the end vertices.
This path is called the \emph{spine}.
Further, $\{v_1,\ldots,v_i\}$ are alternatingly (in the order on the spine) connected to $r$ and $s$ to complete a combinatorial 4-PPT.
The reflex angle at $v_k$, $2\leq k\leq i$ is the angle between the two edges of the spine (incident to $v_k$).
The reflex angle at $v_1$ is inside the face defined by $r, s, v_1$, and $v_2$ (if $i=1$, then $v_2=t$).
Depending on whether $v_1$ is connected to $r$ or $s$ we distinguish between an $r$-spinal or $s$-spinal combinatorial 4-PPT, respectively.

\begin{figure}[htb]
\centering
\includegraphics{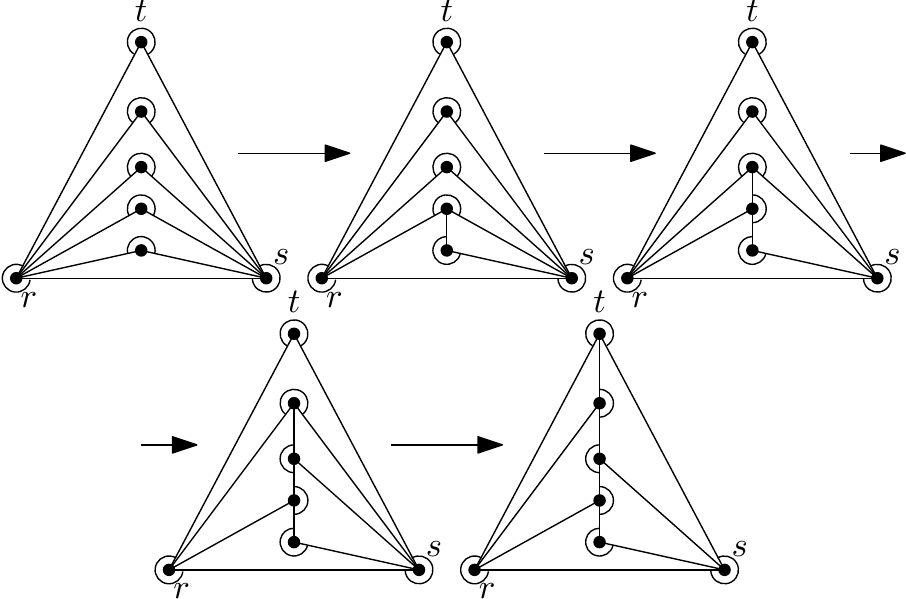}
\caption{Flipping from the canonical combinatorial 4-PPT to the $s$-spinal combinatorial 4-PPT.}
\label{fig:BetweenCanonicalSpinal}
\end{figure}

Observe that there exists a simple sequence of $i$~flips to transform a canonical combinatorial 4-PPT to the $r$- or $s$-spinal combinatorial 4-PPT.
See \figurename~\ref{fig:BetweenCanonicalSpinal} for an example of flipping to the $s$-spinal combinatorial 4-PPT.
Flipping to the $r$-spinal combinatorial 4-PPT is analogous, but flipping the edge $sv_1$ in the first step.
It is easy to see that the total order in the canonical combinatorial 4-PPT is the same as the one on the spine, for the two spinal combinatorial 4-PPTs:
Two vertices are neighbored on the spine if and only if they are neighbored in the total order of the canonical combinatorial 4-PPT.

\begin{observation}
\label{obs:CanonicalSpinal}
For a triangular outer face $rst$ and $i$ interior vertices, flipping from a canonical combinatorial 4-PPT with base edge $rs$ to an $r$- or $s$-spinal combinatorial 4-PPT can be done in $i$ flips.
The order in the canonical combinatorial 4-PPT equals the order on the spine. 
\end{observation}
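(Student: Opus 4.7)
The plan is to exhibit an explicit sequence of $i$ flips transforming the canonical combinatorial 4-PPT with base edge $rs$ into the $s$-spinal one, verifying each step by Lemma~\ref{lemma:existanceofflips}; the $r$-spinal case is symmetric, starting instead from the flip of $sv_1$. Throughout, I index the interior vertices so that the 3-cycles $rsv_1\subset rsv_2 \subset\cdots\subset rsv_i$ give the inclusion order of the canonical 4-PPT, and I set $v_{i+1}:=t$. The sequence is: at step $k$ (for $1\le k\le i$), flip the unique interior edge of the current interior triangle that faces $t$, replacing it by the new spine edge $v_k v_{k+1}$. For the $s$-spinal target this amounts to flipping $rv_1, sv_2, rv_3, sv_4, \ldots$ in order, in line with Figure~\ref{fig:BetweenCanonicalSpinal}.

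The core of the proof is an induction on $k$ with the following invariant: after the $k$-th flip, the vertices $v_1,\ldots,v_{k+1}$ form a path $v_1 v_2 \cdots v_{k+1}$ with the alternating attachments $sv_1, rv_2, sv_3, \ldots$ already in place, while the subgraph on $\{r,s,v_{k+1},\ldots,v_i,t\}$ is unchanged from the canonical; hence the unique interior triangular face sits between $v_{k+1}$, the appropriate member of $\{r,s\}$, and $v_{k+2}$. Existence of the $(k+1)$-th flip then follows from Lemma~\ref{lemma:existanceofflips}, and validity (no multiple edge) holds because by the invariant $v_{k+2}$ is adjacent only to $r$ and $s$ before the flip, so the new edge $v_{k+1}v_{k+2}$ is fresh. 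The first flip is the one degenerate case: the triangle $rsv_1$ shares both of its interior edges with the adjacent 4-face, so part~(1) of the lemma pins down $v_1v_2$ as the unique valid replacement for $rv_1$. For $k\ge 2$ the relevant $\triangle\cup F$ is a non-degenerate 5-face and part~(2) of the lemma yields two valid flips; I would choose the one toward the reflex vertex, verifying that it is precisely the edge $v_k v_{k+1}$ demanded by the invariant.

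The step that I expect to require the most care is tracking the reflex-angle tags along the way, so as to certify that the final 4-PPT really is the $s$-spinal one (with the reflex angle at each $v_k$ in the prescribed face), and not merely isomorphic to it as an unlabeled combinatorial graph. This is essentially routine bookkeeping under Lemma~\ref{lemma:existanceofflips}, but has to be written into the inductive invariant so that the tags are controlled at every step. Once this is done, the second statement of the observation is a direct consequence of the construction: the order along the spine is $v_1, v_2, \ldots, v_i$, which is precisely the inclusion order on the 3-cycles $rsv_k$ that we used to label the canonical 4-PPT in the first place, so the two total orders coincide.
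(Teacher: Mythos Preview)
Your proposal is correct and follows the same approach as the paper, which merely points to Figure~\ref{fig:BetweenCanonicalSpinal} and declares the sequence ``easy to see''; you have simply made the inductive invariant and the appeal to Lemma~\ref{lemma:existanceofflips} explicit. One tiny wording issue: for $k=1$ the interior triangle $rsv_1$ has \emph{two} interior edges facing $t$, so ``the unique interior edge'' is imprecise there---but you immediately resolve this by specifying that the $s$-spinal sequence starts with $rv_1$ (and the $r$-spinal with $sv_1$), exactly as the paper does.
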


Let $T$ be some canonical labeled combinatorial 4-PPT with triangular outer face $rst$, base edge $rs$, and $i$ interior vertices $\{v_1,\ldots,v_i\}$.
If $i<2$ then reordering of the interior vertices is not necessary.
For reordering the $i\geq2$ interior vertices in our labeled setting we need to be able to exchange two neighbored labeled vertices (for which we will use the spine).
We call the required sequence of flips a \emph{swap}.
For swapping two labeled vertices $v_k$ and $v_{k+1}$ in a spinal combinatorial 4-PPT (and thus also in a canonical combinatorial 4-PPT) we need to distinguish the three cases $k=1$, $k=2$, and $3\leq k< i$.
%
If $k \leq i-2$, let $t' = v_{k+2}$; otherwise, let $t' = t$.
For all cases we consider the subset $\{v_1,\ldots,v_{k+1}\}\cup\{r\}\cup\{s\}\cup\{t'\}$.
%
%
Further, we assume that we have already flipped to the spinal combinatorial 4-PPT with outer face $rst'$.
Note that the subgraph in $rst'$ is a spinal combinatorial 4-PPT.
(By Observation~\ref{obs:CanonicalSpinal}, flipping from a canonical combinatorial 4-PPT to this situation takes $k+1$ flips.)

\begin{figure}[htb]
\centering
\includegraphics{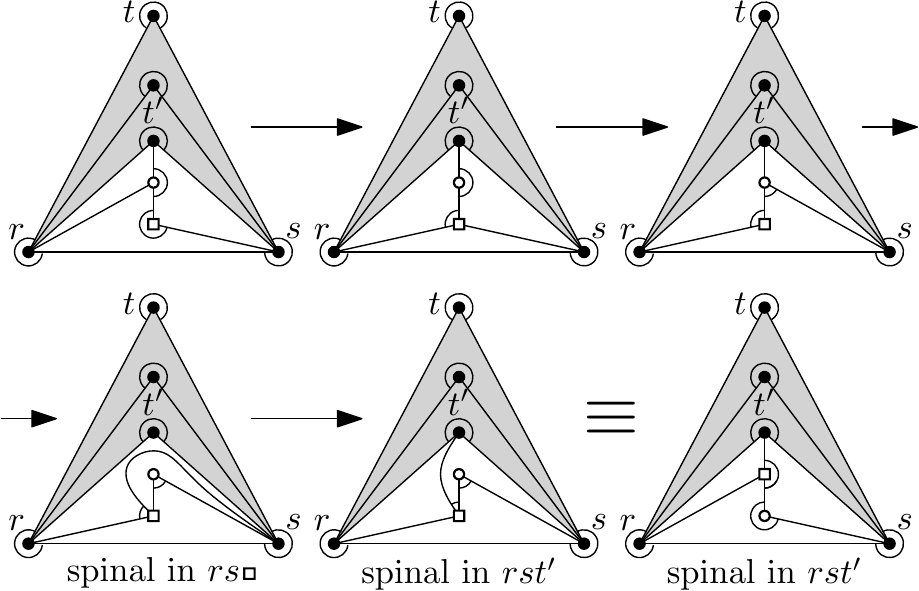}
\caption{Swapping the position of $v_1$ and $v_2$ in a combinatorial 4-PPT that is spinal in $rst'$.}
\label{fig:firstswap}
\end{figure}

The flip sequence for the case $k=1$ is depicted in \figurename~\ref{fig:firstswap}.
The two vertices $v_1$ and $v_2$ are shown as a white square and a white dot, respectively, which depict the different labels of the vertices.
After three flips we can reach a combinatorial 4-PPT that is spinal in $rsv_2$, where the labeled vertices $v_1$ and $v_2$ have been swapped (\figurename~\ref{fig:firstswap}, second row, left).
Recall that the numbering of the vertices $v_k$, $1\leq k\leq i$, is defined by their position along the spine; e.g., $v_1$ is always the first vertex on the spine.
Observe that this combinatorial 4-PPT is only one additional flip away from the canonical combinatorial 4-PPT (with swapped labels).
Applying an additional flip, we can reach a combinatorial 4-PPT that is spinal in $rst'$, where the labeled vertices $v_1$ and $v_2$ have been swapped (\figurename~\ref{fig:firstswap}, second row, middle and right).
%
%

The flip sequences for the cases $k=2$ and $3\leq k< i$ are very similar.
In fact, the case $k=2$ is just a special case of the case $3\leq k< i$.
For completeness, the flip sequence for the case $k=2$ is given in \figurename~\ref{fig:secondswap}.

\begin{figure}[htb]
\centering
\includegraphics{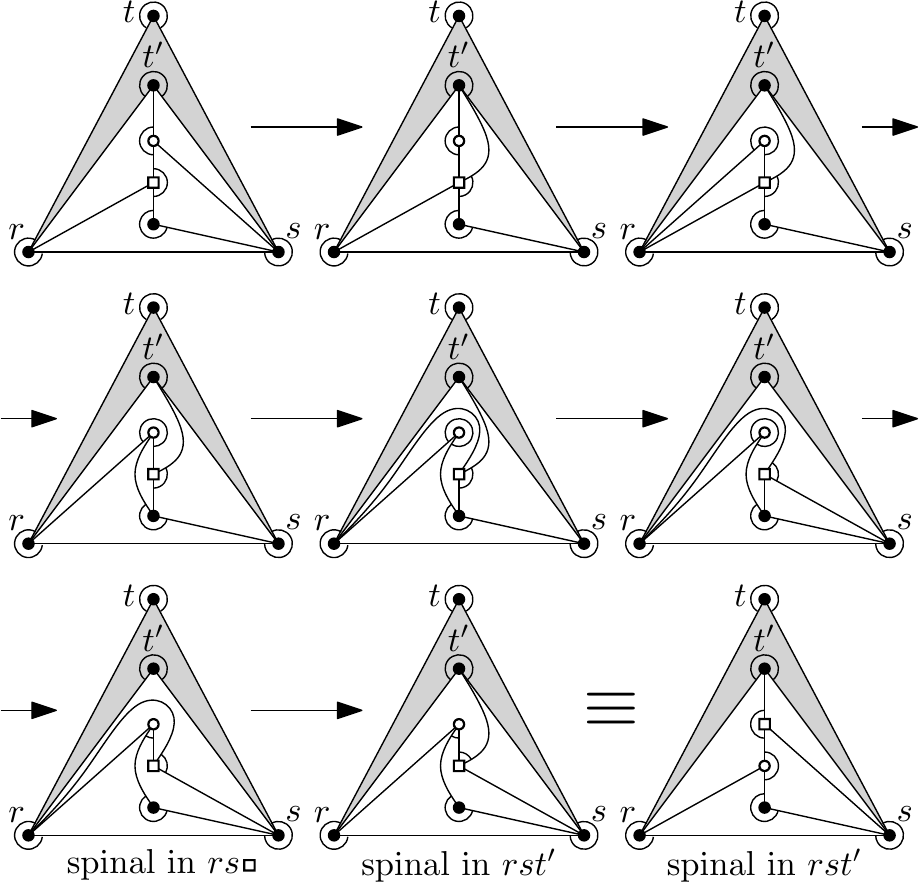}
\caption{Swapping the position of $v_2$ and $v_3$ in a combinatorial 4-PPT that is spinal in $rst'$.}
\label{fig:secondswap}
\end{figure}

\figurename~\ref{fig:generalswap} exemplifies the flip sequence for swapping the neighbored vertices $v_k$ and $v_{k+1}$ for $3\leq k\leq i-1$.
In the example $i=5$ and the labeled vertices $v_3$ and $v_4$ should be swapped, i.e., $k=3$.
For larger values of $i$ and $k$ the remaining interior vertices will be placed in the interior of $r t' s t$ (and $r s v_1 v_{k-1}$).
These areas (shown gray in \figurename~\ref{fig:firstswap},~\ref{fig:secondswap}, and~\ref{fig:generalswap}) remain untouched throughout the whole swap operation.
In this sense, all described swap operations (sequences of flips) are local.

\begin{figure}[htb]
\centering
\includegraphics{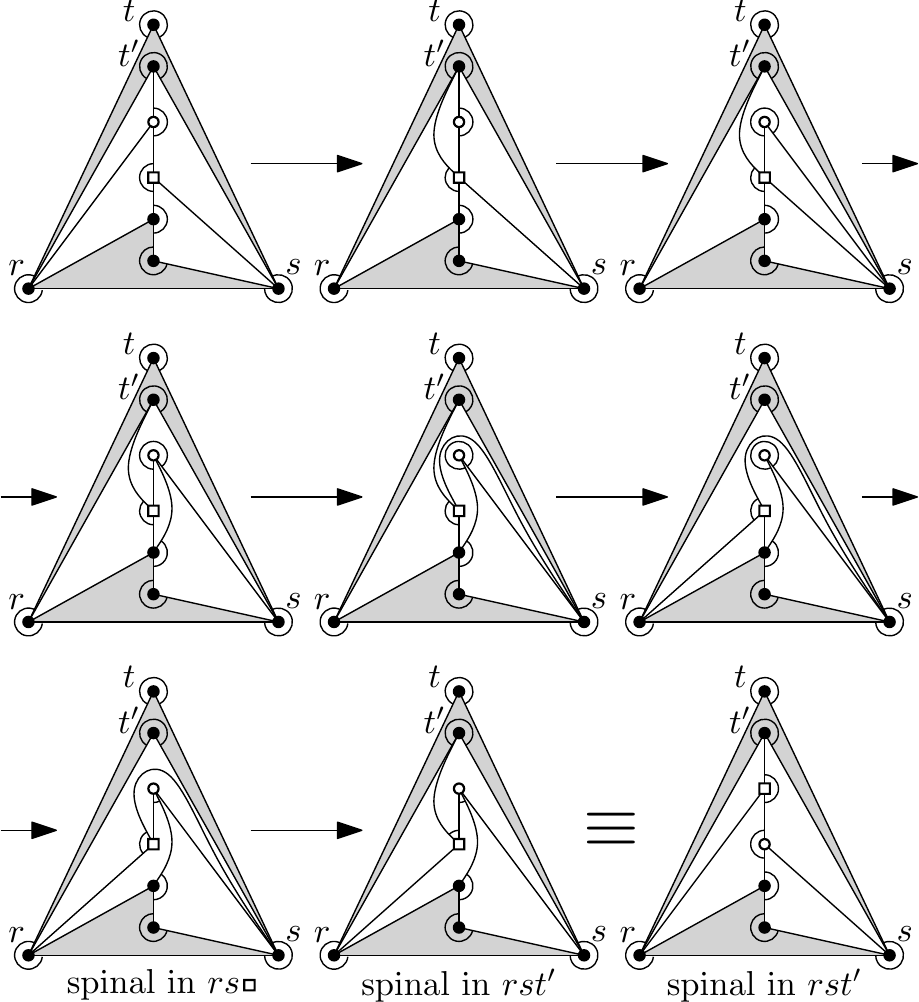}
\caption{Swapping the position of $v_k$ and $v_{k+1}$, $3\leq k\leq i-1$, in a combinatorial 4-PPT that is spinal in $rst'$.}
\label{fig:generalswap}
\end{figure}

Altogether, the presented flip sequences allow to reorder the labeled interior vertices in a canonical combinatorial 4-PPT with $O(n^2)$ flips.

\begin{theorem}
\label{thm:labeledconnectedness}
The flip graph of labeled combinatorial 4-PPTs with $n$ vertices and fixed triangular outer face is connected with diameter $O(n^2)$.
\end{theorem}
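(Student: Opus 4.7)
The strategy combines Theorem~\ref{thm:connectedness} with the local swap machinery developed above. Given two labeled combinatorial 4-PPTs $T_1$ and $T_2$ sharing the same labeled outer face $rst$, I would first flip each of them to the (unlabeled) canonical combinatorial 4-PPT, using Theorem~\ref{thm:connectedness} at a cost of $O(n^2)$ flips each. The resulting labeled graphs $C_1$ and $C_2$ then agree on the outer face and differ only by a permutation $\pi$ of the labels of the $i = n-3$ interior vertices, read off in the total order induced by inclusion in nested 3-cycles through $r$ and $s$.

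The main step is to realize $\pi$ as a sequence of adjacent transpositions. By Observation~\ref{obs:CanonicalSpinal}, I flip $C_1$ to an $s$-spinal combinatorial 4-PPT in $O(n)$ flips, aligning the total order with the order along the spine. Then, for each adjacent transposition produced by bubble-sorting $\pi$, I apply the corresponding swap sequence from \figurename~\ref{fig:firstswap}, \figurename~\ref{fig:secondswap}, or \figurename~\ref{fig:generalswap}, depending on whether the smaller index $k$ is $1$, $2$, or at least $3$. Each such swap uses $O(1)$ flips and returns the graph to a spinal combinatorial 4-PPT in the appropriate $rst'$, so bubble sort's $O((n-3)^2) = O(n^2)$ transpositions translate to $O(n^2)$ flips in total. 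A final application of Observation~\ref{obs:CanonicalSpinal} in reverse brings the graph back to canonical form, now equal to $C_2$; reversing the flip sequence from $T_2$ to $C_2$ completes the path from $T_1$ to $T_2$, with the total length bounded by $O(n^2) + O(n) + O(n^2) + O(n) + O(n^2) = O(n^2)$. This establishes both connectivity and the diameter bound.

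The part I expect to require the most care is verifying that the swap sequences are genuinely local with respect to the rest of the combinatorial 4-PPT. Specifically, I need to check that all vertices outside the active region $\{v_1,\ldots,v_{k+1}\}$, together with their incident edges and angle tags, are combinatorially untouched by a swap, so that consecutive swaps in the bubble sort can be chained without re-synchronising the overall spinal structure each time. The three swap figures highlight the untouched portions in gray and exhibit spinal 4-PPTs at both endpoints of each swap, so once this locality claim is checked in detail from the figures (and the endpoint spinal structures are seen to coincide with the starting form expected by the next swap), the remaining counting is immediate.
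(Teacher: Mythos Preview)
Your proposal is correct and follows essentially the same approach as the paper: reduce both graphs to the canonical form via Theorem~\ref{thm:connectedness}, then realize the required permutation of interior labels by bubble-sorting along the spine using the $O(1)$-flip swap gadgets. The paper organizes the bubble sort explicitly into top-down passes (so that swapping $v_k,v_{k+1}$ leaves the graph spinal in $rsv_{k+1}$, which is precisely the precondition for the next swap of $v_{k-1},v_k$, and a non-swap comparison costs one flip to advance), which is exactly the chaining verification you flag as the point needing care.
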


\begin{proof}
Let $T_1$ and $T_2$ be any two combinatorial 4-PPTs with $n$ vertices and triangular outer face.
By Theorem~\ref{thm:connectedness}, flipping both $T_1$ and $T_2$ to a canonical combinatorial 4-PPT $T_1'$ and $T_2'$,
respectively, takes $O(n^2)$ flips.
Except for the order of the labeled interior vertices, $T_1'$ and $T_2'$ are identical.
Thus, to flip from $T_1$ to $T_2$ we need to reorder the labeled interior vertices of $T_1'$ to match their order in $T_2'$ and then reverse the flip sequence from $T_2$ to $T_2'$.
This reordering step can also be done in $O(n^2)$ flips, by using a sorting algorithm based on exchanging pairs of neighbors.




%
%

We use a ``bubble sort'' type algorithm:
We flip the canonical combinatorial 4-PPT to the spinal combinatorial 4-PPT; i.e., a combinatorial 4-PPT that is spinal in $r s t$.
By Observation~\ref{obs:CanonicalSpinal} this transformation takes $O(n)$ flips.
We use the swap operation to exchange the two top inner vertices $v_{i}$ and $v_{i-1}$ if needed.
Applying the swap operation results in a combinatorial 4-PPT that is spinal in $r s v_{i}$.
If we do not need to exchange $v_{i}$ and $v_{i-1}$, we need one flip to get a combinatorial 4-PPT that is spinal in $r s v_{i}$.
As in bubble sort, we continue to compare and possibly exchange the next pair of neighbored vertices until $v_2$ and $v_1$ have been processed.
After every step $k$, $k$ from $i-1$ down to $1$, the two neighbored vertices $v_{k+1}$ and $v_k$ have been exchanged if needed and the resulting combinatorial 4-PPT is spinal in $r s v_{k+1}$.

After one such pass (from $k=i-1$ to $k=1$) the vertex $v_1$ is at its final position (according to its label).
Moreover, the combinatorial 4-PPT is just one flip away from the canonical one.
Further, as in one pass we move from top to bottom on the spine, each swap operation needs only $O(1)$ flips (cf.\ \figurename~\ref{fig:firstswap},~\ref{fig:secondswap}, and~\ref{fig:generalswap}).
Hence, one pass needs $O(n)$ flips.
It is easy to see that after $i-1=O(n)$ such passes every labeled vertex has been moved to its required position.
Therefore, with $O(n^2)$ flips we can reorder the labeled vertices in $T_1'$ to match their order in $T_2'$.
\end{proof}

\section{Connectivity for Combinatorial 4-PPTs with Outer Face of Arbitrary Size
   and Labeled Vertices}
\label{sec:generalconnected}

So far we have proved connectivity of combinatorial 4-PPTs with the outer face restricted to be of size three.
In this section we drop this restriction and allow outer faces of arbitrary size.
We prove that for this general case the graph of combinatorial 4-PPTs stays connected, even for labeled vertices.
The case with a triangular outer face will be a key ingredient for this proof.
To this end we define a general canonical combinatorial 4-PPT and show how to reach it with $O(n^2)$ flips.
Recall that, for labeled combinatorial 4-PPTs, we insist on the triangular outer face to be fixed throughout the flip sequence.
For larger outer faces, we also maintain this property, i.e., the vertices of the outer face are fixed and have a fixed cyclic order along the boundary in every labeled combinatorial 4-PPT along the flip sequence.
In particular, this means that the source and target labeled combinatorial 4-PPTs have to have the same sequence of labels on the boundary.

\begin{figure}[htb]
\centering
\includegraphics{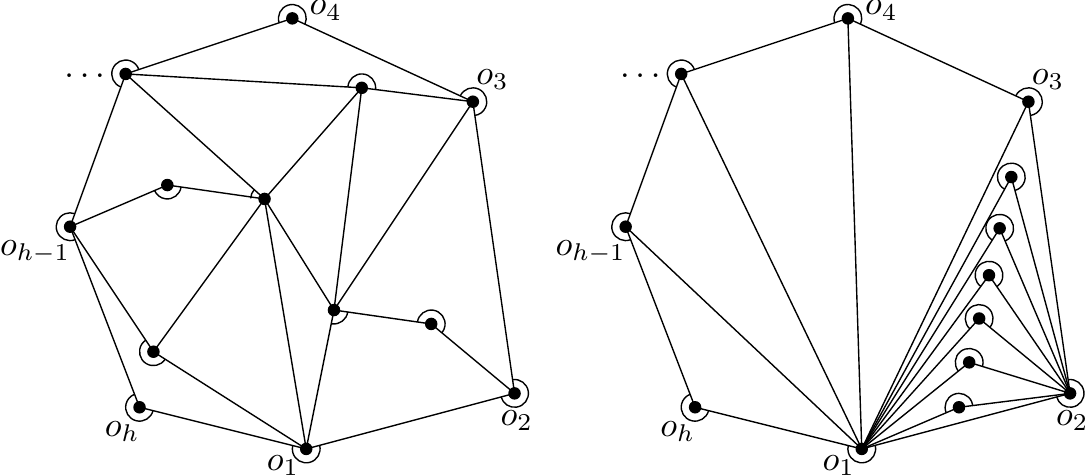}
\caption{Left: An example combinatorial 4-PPT. Right: The corresponding general canonical combinatorial 4-PPT.}
\label{fig:generalcanonical}
\end{figure}

Let $T$ be a combinatorial 4-PPT with $h$~vertices on the outer face and $i$~interior vertices ($n=h+i$).
Let the $h$~vertices on the outer face be $o_1,\ldots,o_h$ in counter-clockwise order.
Then the general canonical combinatorial 4-PPT (for $T$) consists of a triangulation on $o_1,\ldots,o_h$ with diagonals $o_1o_k$, $3\leq k\leq h-1$, (a so-called \emph{fan} at $o_1$) and a canonical combinatorial 4-PPT on all $i$ interior vertices with triangular outer face $o_1o_2o_3$ and $o_1o_2$ as the base edge.
See \figurename~\ref{fig:generalcanonical} for an example.

The flip sequence to obtain the general canonical combinatorial 4-PPT consists of three steps.
Step~1: flipping to a combinatorial 4-PPT inducing the fan at $o_1$;
Step~2: flipping to a canonical combinatorial 4-PPT inside each 3-cycle of that fan; and
Step~3: moving all interior vertices into the 3-cycle $o_1o_2o_3$ such that $o_1o_2o_3$ is the ``outer face'' of a canonical combinatorial 4-PPT with base edge $o_1o_2$.
In the following we will present each step in detail and we will show that overall $O(n^2)$ flips are sufficient.

\subsection{Step~1}

In a nutshell, we want to introduce one diagonal of the outer face after another, each time ``cutting an ear''.
In more detail, we first introduce the diagonal $o_1o_{h-1}$ of the outer face $o_1,\ldots,o_h$ to cut off the ear $o_1o_{h-1}o_{h}$.
After that we will be ready to forget about the 3-cycle $o_1o_{h-1}o_{h}$ for the moment, no matter whether its interior is empty of vertices or not.
Then we recurse on the smaller outer face $o_1,\ldots,o_{h-1}$.
This way we get a combinatorial 4-PPT containing the diagonals $o_1o_{k}$, $3\leq k\leq h-1$, in its edge set.

It remains to show how to cut an ear in a combinatorial 4-PPT, say $T$, if the diagonal $o_1o_{h-1}$ does not already exist in $T$.
This is very similar to the approach in Section~\ref{sec:GraphOfFlips}.
By Lemma~\ref{lemma:triangletochgeneral} it is always possible to move a triangular face to an arbitrary edge of the outer face.
%
Thus we can ensure that a triangular face $\triangle$ is incident to the edge~$o_1o_{h}$.
If $\triangle$ is also incident to the edge~$o_{h}o_{h-1}$, then we can cut off the ear $o_1o_{h-1}o_{h}$ and iterate on the combinatorial 4-PPT with outer face $o_1,\ldots,o_{h-1}$ (note that this part contains $h-3$ triangular faces).
Otherwise, we flip away all edges incident to $o_{h}$ between $o_1o_{h}$ and $o_{h}o_{h-1}$ (inside the area not containing the reflex angle) until we can introduce the diagonal $o_1o_{h-1}$.
We explain this process in the next lemma, which is similar to Lemma~\ref{lemma:cleartip} but differs at the end of the sequence.

\begin{lemma}
\label{zzzlemma:cleartip}
Let~$T$ be a combinatorial 4-PPT with outer face $o_1,\ldots,o_h$, $h\geq4$, in which an interior triangular face~$\triangle$ is incident to the edge~$o_1o_{h}$ and the diagonal $o_1o_{h-1}$ is not an edge of $T$.
There exists a sequence of $O(n)$ flips resulting in a combinatorial 4-PPT with $o_1o_{h-1}o_{h}$ as a triangular face.
\end{lemma}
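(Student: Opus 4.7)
The plan is to mimic the proof of Lemma~\ref{lemma:cleartip}, substituting $(r, s, t) \leftarrow (o_1, o_{h-1}, o_h)$. That proof produces a sequence of $O(n)$ flips which reduces the interior degree of $t$ to zero by successively flipping the edge that separates the interior triangular face from an adjacent face. The only required change is the \emph{last} flip of the sequence: Lemma~\ref{lemma:cleartip} always leaves a 4-face incident to $t$, whereas here we want to end with the triangle $o_1 o_{h-1} o_h$.

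First I would check that the hypothesis $o_1 o_{h-1} \notin E(T)$ interacts well with the sequence. Each intermediate flip in Lemma~\ref{lemma:cleartip} introduces an edge with at least one interior endpoint ($rw_2$, $w_1 w_2$, $tu$, or $w_1 w_3$ in the various sub-cases), so the hypothesis is preserved until the last flip. Moreover, the degenerate sub-cases of Phase~1 Case~1 and Phase~2 Case~2 both require the 4-face $F$ adjacent to $tw_1$ to contain $r$ and $s$ as consecutive vertices, i.e.\ to contain $rs$ as an edge; the hypothesis rules this out, so only the non-degenerate variants can occur, and in particular the sequence is well-defined.

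The main step is the modified last flip. At that moment, $t$ has a single interior neighbor $w_1$, the interior triangular face $\triangle$ is adjacent to $tw_1$, and the 4-face $F$ on the other side of $tw_1$ is non-degenerate. The union $\triangle \cup F$ after the removal of $tw_1$ is a non-degenerate 5-face whose three outer-face vertices $r$, $s$, $t$ are convex and whose two remaining vertices are interior. I flip $tw_1$ to $rs$: no multi-edge arises since $o_1 o_{h-1}$ is not an edge, and this diagonal of the 5-face splits it into the desired triangle $\{o_1, o_{h-1}, o_h\}$ together with a 4-face containing both interior vertices of the 5-face.

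The main point to verify is that this last flip really yields a valid combinatorial 4-PPT. The new triangle has three convex angles: the full 5-face angle at $t$, and two sub-angles of the convex 5-face angles at $r$ and $s$. For the new 4-face, pointedness of the interior vertices forces exactly one of them to carry a reflex 5-face angle (namely the one where the reflex angle of $F$ lies) while the other remains convex, so the 4-face inherits the unique reflex angle of $F$. Combined with the linear length of the sequence of Lemma~\ref{lemma:cleartip}, this gives the claimed $O(n)$ bound.
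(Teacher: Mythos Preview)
Your reduction to Lemma~\ref{lemma:cleartip} is essentially the paper's own argument in disguise: both sweep the edges incident to $o_h$ from the $o_1$-side towards the $o_{h-1}$-side, handle at most one ``bad'' step by switching to a protected auxiliary vertex (your Phase~2 vertex $w_1$ plays exactly the role of the paper's $o_1'$), and finish with one extra flip that creates $o_1o_{h-1}$.

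Two small inaccuracies are worth fixing. First, your claim that every intermediate flip inserts an edge with an interior endpoint is not literally true: in Phase~1 Case~2 the edge $rw_2=o_1w_2$ may join two outer vertices (some $w_j$ can be an $o_\ell$), and similarly for $tu$ in Phase~1 Case~3. What you actually need---and what does hold---is that none of these edges equals $o_1o_{h-1}$, since $w_2\neq w_k=o_{h-1}$ and $t=o_h\neq o_1$. Second, Lemma~\ref{lemma:cleartip} is stated for a triangular outer face and implicitly uses that the interior triangle is unique; with $h\geq 4$ the face $F$ adjacent to your tracked triangle may itself be a triangle. This is harmless (the flip across two triangles is the easy ``Lawson'' case, and one checks via Lemma~\ref{lemma:corners-genLaman} that the problematic configuration of Phase~1 Case~3 with $F$ triangular cannot occur), but it deserves a sentence. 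Your final-flip analysis is fine; note only that ``two remaining vertices are interior'' may fail---one of them could be some $o_\ell$---but the tag check for the new 4-face goes through regardless, since the reflex of $F$ must sit at a non-outer vertex.
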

\begin{proof}
Let $e_0,\ldots,e_{k+1}$ be the $k+2$ edges of $T$ incident to $o_{h}$ in the order of incidence, such that $e_0=o_{1}o_{h}$ and $e_{k+1}=o_{h}o_{h-1}$.
Let $\triangle = F_0\to F_1\to F_2\to\cdots\to F_k$ be the path of faces (incident to $o_{h}$) such that $e_j$, $1\leq j\leq k$, is an edge shared by the faces $F_{j-1}$ and $F_{j}$.
Note that $F_k$ is incident to $o_{h}o_{h-1}$.
With $w_j$ we denote the end vertex of $e_j$ that is not $o_{h}$.

We define a sequence of $k$ flips.
With the $j$-th flip we want to introduce a valid edge~$e'$, such that a triangular face is incident to both~$e_{j+1}$ and $o_1$.
However, this is not always possible as the insertion of the edge~$e'$ might create a double edge.
In this case, the triangular face is incident to $e_{j+1}$ and a vertex named $o_1'$, which will be used instead of $o_1$ for the remainder of the sequence of $k$ flips.
(See Cases~1 and 2 below.)
Fortunately, this can happen only once and we will carefully distinguish the different cases.
At the $j$-th flip, let $\triangle_j$ be the interior triangular face incident to $o_h$ and $o_1$ (or $o_1'$).
Consider the region $\triangle_j\cup F_j$.

\begin{figure}[htb]
\centering
\includegraphics{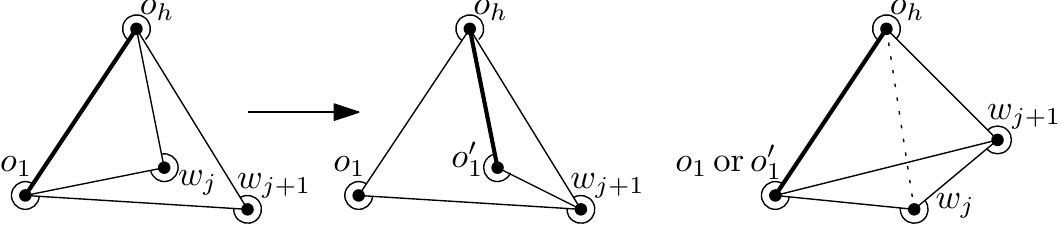}
\caption{Left (before the flip) and middle (after the flip) show the
  flip in the degenerate case. Right shows the simple case of a flip
  between two triangular faces. In bold, the edge with endpoints $o_1$ ($o_1'$ if needed) and $o_h$.}
\label{fig:degenerateearcut}
\end{figure}

\noindent \textbf{Case~1: $\bm{\triangle_j\cup F_j}$ is a degenerate 5-face.}
A flip resulting in a new triangular face that is incident to both~$e_{j+1}$ and $o_1o_{h}$ is clearly not possible.
In this case, the vertex~$w_{j}$ is renamed $o_1'$.
For the edge $o_1o_1'$ there exists a unique valid flip, by Lemma~\ref{lemma:existanceofflips}.
This results in a new triangular face that is incident to $e_{j+1}$ and $o_1'o_{h}$.
Observe that $o_1'$ is \emph{safe}, in the sense that for each future flip, resulting in an edge~$e'$ incident to $o_1'$, $e'$ will not create a double edge. Thus, this case can only occur once.
See \figurename~\ref{fig:degenerateearcut} (left and middle).

\begin{figure}[htb]
\centering
\includegraphics{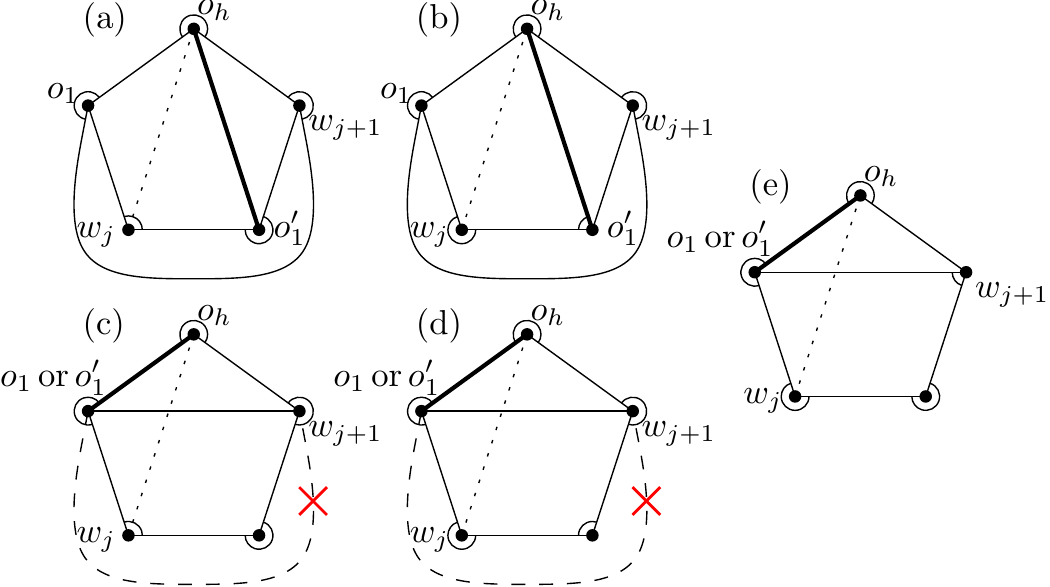}
\caption{The five subcases of the non-degenerate case, depending on the relative position of the reflex interior angle. In bold, the edge with endpoints $o_1$ ($o_1'$ if needed) and $o_h$.
  In (a) and (b)  edge $o_1w_{j+1}$ is assumed to be an edge of the combinatorial 4-PPT before the flip.
  The cases (c) and (d) are the same as (a) and (b), respectively, except that it is assumed that $o_1w_{j+1}$ is not an edge of the combinatorial 4-PPT before the flip.
  The case (e) is a flip inserting an edge incident to the reflex interior angle, which is always valid.}
\label{fig:nondegenerateearcut}
\end{figure}

\noindent \textbf{Case~2: $\bm{\triangle_j\cup F_j}$ is a non-degenerate 5-face and $\bm{o_1w_{j+1}}$ is already an edge of the combinatorial 4-PPT.}
See \figurename~\ref{fig:nondegenerateearcut}~(a) and~(b) for the two subcases, which differ in the two possible positions of the reflex angle inside $\triangle_j\cup F_j$.
In both subcases we denote as $o_1'$ the vertex of $\triangle_j\cup F_j$ that is neither $o_1$, $o_{h}$, $w_{j}$, nor~$w_{j+1}$.
Exchanging the edge $e_{j}$ with the edge $o_1'o_{h}$ is a valid flip.
(See the proof of Lemma~\ref{lemma:existanceofflips} for the non-degenerate case and use the fact that $o_1w_{j+1}$ is assumed to already be an edge of the combinatorial 4-PPT.)
Note that $o_1'$ is also safe in this case.
Therefore, Case~2 as well as Case~1 cannot occur as soon as the edge $o_1'o_{h}$ has been introduced.
That is, out of these two cases only one can occur at all and only at most once in total during the whole flip sequence.

\noindent \textbf{Case~3: $\bm{\triangle_j\cup F_j}$ is a non-degenerate 5-face and $\bm{o_1w_{j+1}}$ (or $\bm{o_1'w_{j+1}}$) is not an edge of the combinatorial 4-PPT.}
See \figurename~\ref{fig:nondegenerateearcut}~(c), (d), and~(e) for the three subcases, which differ in the position of the reflex angle inside $\triangle_j\cup F_j$.
In all three subcases there exists a valid flip introducing $o_1w_{j+1}$, such that the new triangular face is incident to both~$e_{j+1}$ and $o_1o_{h}$ (or $o_1'o_{h}$).

\noindent \textbf{Case~4: $\bm{\triangle_j\cup F_j}$ is a quadrangular face.}
Hence, $F_j$ is a triangular face.
In this case, it is easy to see that there exists a valid flip introducing $o_1w_{j+1}$, such that one of the new triangular faces is incident to both~$e_{j+1}$ and $o_1o_{h}$ (or $o_1'o_{h}$).
See \figurename~\ref{fig:degenerateearcut}~(right).

\medskip
After $k$ flips the sequence ends with a triangular face~$\triangle'$ incident to $o_ho_{h-1}$ and either $o_1o_{h}$ or $o_1'o_{h}$.
In the former case the resulting combinatorial 4-PPT has $o_1o_{h-1}o_{h}$ as a triangular face, as required.
In the latter case we need one more flip.
The edge $o_1'o_{h}$ separates $\triangle'$ from a face that is incident to $o_1o_{h}$.
We assumed the diagonal $o_1o_{h-1}$ not to be an edge of the combinatorial 4-PPT and $o_1o_{h-1}$ was also not introduced during the sequence of $k$ flips.
Therefore, replacing $o_1'o_{h}$ by $o_1o_{h-1}$ is a valid flip. See \figurename~\ref{fig:onemoreflip}.

\begin{figure}[htb]
\centering
\includegraphics{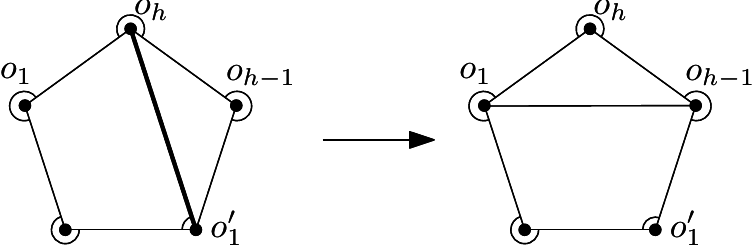}
\caption{Flip for the case needing one more flip because $o_1'o_{h}$ (bold as in previous figures) separates $\triangle'$ from a face incident to $o_1o_{h}$.}
\label{fig:onemoreflip}
\end{figure}

All in all, the sequence consists of $k$ flips, one flip per edge $e_j$, plus possibly one additional flip if the edge $o_1'o_{h}$ has been introduced during the flip sequence.
Hence, $O(n)$ flips are sufficient.
\end{proof}

Using Lemma~\ref{zzzlemma:cleartip}, we can flip to a combinatorial 4-PPT that contains the diagonals of the fan at $o_1$, by iteratively introducing the diagonals $o_1o_j$, from $j=h-1$ down to $j=3$, whenever this diagonal is not already present.

\begin{lemma}
\label{lemma:step1}
Given a combinatorial 4-PPT with outer face $o_1,\ldots,o_h$, $h\geq4$, there exists a sequence of $O(n^2)$ flips resulting in~$T$, a combinatorial 4-PPT with outer face $o_1,\ldots,o_h$, such that
the diagonals (of the outer face) $o_1o_{j}$, $3\leq j\leq h-1$, are in the set of edges of~$T$.
\end{lemma}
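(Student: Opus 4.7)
The plan is to iteratively insert the fan diagonals $o_1o_j$ for $j$ running from $h-1$ down to $3$. At the beginning of the iteration that processes $o_1o_k$, the diagonals inserted in earlier iterations, together with any fan diagonals that were already in the input, partition the current combinatorial 4-PPT into ``cut-off'' regions adjacent to $o_1$ and a remaining sub-combinatorial 4-PPT $R_k$ with outer face $o_1, o_2, \ldots, o_{k+1}$. If $o_1o_k$ is already present in $R_k$, I would skip the iteration; otherwise, I would apply Lemma~\ref{lemma:triangletochgeneral} inside $R_k$ to move an interior triangular face adjacent to the edge $o_1o_{k+1}$ (in $O(n)$ flips), and then apply Lemma~\ref{zzzlemma:cleartip} inside $R_k$ to introduce the diagonal $o_1o_k$ (in another $O(n)$ flips), making $o_1o_ko_{k+1}$ a triangular face. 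All flips touch only $R_k$, so the cut-off regions remain unchanged. With at most $h-3 = O(n)$ iterations, each using $O(n)$ flips, the total flip count is $O(n^2)$.

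The main technical point to verify is that $R_k$ really is a well-defined combinatorial 4-PPT at the start of each iteration. The interior faces of $R_k$ are inherited from the current combinatorial 4-PPT together with their angle tags, so every interior face still has exactly three convex angles. The outer-face angles at $o_2,\ldots,o_k$ are inherited and still tagged reflex; at $o_1$ and $o_{k+1}$ the outer angle of $R_k$ merges the original outer angle with the convex corners of the already cut-off triangular ears, and can consistently be tagged reflex, since all of the remaining interior angles of $o_1$ and $o_{k+1}$ inside $R_k$ are convex. In order to apply Lemma~\ref{lemma:triangletochgeneral} one also needs an interior triangular face in $R_k$, which is guaranteed by Corollary~\ref{corol:number_triangles} applied with $b = k+1$ boundary vertices and $c=0$ convex boundary angles: this yields $t = k-1 \geq 2$ triangular faces for every $k \geq 3$.

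The main obstacle will be the case in which $o_1o_k$ happens to already be an edge of the current combinatorial 4-PPT. In that situation Lemma~\ref{zzzlemma:cleartip} does not apply directly, and the region on the ``ear side'' of $o_1o_k$ need not be a single triangle but may contain interior vertices. However, this does not affect the conclusion of the lemma, which asks only that the fan diagonals appear as edges of $T$: simply skipping the iteration suffices, and the next iteration can still recurse inside the subregion $R_{k-1}$ bounded by $o_1, o_2, \ldots, o_k$, which is itself a valid combinatorial 4-PPT by the same argument, regardless of whether the other side of the chord $o_1o_k$ is triangulated further.
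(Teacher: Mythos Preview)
Your proposal is correct and follows essentially the same approach as the paper: iterate $j$ from $h-1$ down to $3$, skip when the diagonal $o_1o_j$ is already present, and otherwise use Lemma~\ref{lemma:triangletochgeneral} followed by Lemma~\ref{zzzlemma:cleartip} inside the remaining region to cut off the next ear, for a total of $O(n)$ flips per iteration and $O(n^2)$ overall. Your explicit verification that the remaining region $R_k$ is itself a valid combinatorial 4-PPT (and contains an interior triangular face via Corollary~\ref{corol:number_triangles}) is a detail the paper leaves implicit.
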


\subsection{Step~2}

Let $\triangle_j$ be the 3-cycle $o_1o_jo_{j+1}$, $2\leq j\leq h-1$.
After Step~1 the edges of these 3-cycles are edges of the combinatorial 4-PPT.
Let $i_j$ be the number of interior vertices inside $\triangle_j$.
By Theorem~\ref{thm:connectedness}, $O({i_j}^2)$ flips are sufficient to flip to the canonical combinatorial 4-PPT with outer face $\triangle_j$.
Therefore, overall $O(n^2)$ flips are sufficient to flip to the canonical combinatorial 4-PPT inside each 3-cycle of the fan.

\begin{lemma}
\label{lemma:step2}
Let $T$ be a combinatorial 4-PPT with outer face $o_1,\ldots,o_h$, $h\geq4$, such that
the diagonals (of the outer face) $o_1o_{j}$, $3\leq j\leq h-1$, are in the set of edges of $T$.
There exists a sequence of $O(n^2)$ flips resulting in a
combinatorial 4-PPT, $T'$, with outer face $o_1,\ldots,o_h$, such
that 1) the diagonals (of the outer face) $o_1o_{j}$, $3\leq j\leq
h-1$, are in the set of edges of $T'$, and 2) the subgraph of $T'$
inside any three-cycle $o_1o_{j}o_{j+1}$, $2\leq j\leq h-1$, is a
canonical combinatorial 4-PPT.
\end{lemma}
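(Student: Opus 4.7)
The plan is to apply Theorem~\ref{thm:connectedness} independently inside each 3-cycle of the fan, treating the cycle as a fixed triangular outer face. Since $T$ contains every fan diagonal $o_1o_j$, each 3-cycle $\triangle_j=o_1o_jo_{j+1}$ with $2\le j\le h-1$ bounds a topological disk, and I let $S_j$ be the subgraph of $T$ consisting of $\triangle_j$ together with every vertex and edge of $T$ strictly inside it, equipped with the inherited angle tags. I first verify that $S_j$ is itself a combinatorial 4-PPT with outer face $\triangle_j$: its interior faces coincide with the interior faces of $T$ contained in $\triangle_j$, so each has at most four sides; every interior vertex of $S_j$ is pointed in $T$ and its unique reflex angle lies in $S_j$; and the three vertices of $\triangle_j$ are outer-face vertices of $T$, so by pointedness their reflex angle in $T$ sits in the outer face of $T$ and hence in the outer face of $S_j$, which means the merged outer-face angle of $S_j$ at each of them can be tagged reflex without introducing a second reflex angle. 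Let $i_j$ be the number of interior vertices of $S_j$; since the interiors of the $\triangle_j$ are pairwise disjoint, $\sum_{j=2}^{h-1} i_j \le n$.

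Next, by Theorem~\ref{thm:connectedness}, a sequence of $O(i_j^2)$ flips transforms $S_j$ into a canonical combinatorial 4-PPT with outer face $\triangle_j$. The crucial property is locality: by Lemmas~\ref{lemma:triangletochgeneral} and~\ref{lemma:cleartip}, every flip in this sequence touches only an interior edge of an interior triangular face of $S_j$, so it never modifies the boundary of $\triangle_j$ nor any edge of $T$ lying outside $\triangle_j$. Each such flip is also a valid flip when lifted back to $T$: its inserted edge $e'$ has both endpoints in the closed disk bounded by $\triangle_j$, so by planarity of $T$, if $e'$ were already an edge of $T$ it would lie inside this disk and hence already in $S_j$, contradicting validity of the flip in $S_j$. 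Executing these sequences for $j=2,\ldots,h-1$ in turn therefore yields a combinatorial 4-PPT $T'$ with outer face $o_1,\ldots,o_h$ that preserves every fan diagonal and is canonical inside every $\triangle_j$.

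Summing, the total flip count is $\sum_{j=2}^{h-1} O(i_j^2) \le O\bigl((\sum_{j} i_j)^2\bigr) = O(n^2)$, which gives the claimed bound. The main conceptual hurdle is the verification in the first paragraph that each $S_j$ genuinely qualifies as a combinatorial 4-PPT and that its local flips lift to valid flips in $T$; both points rely on planarity of $T$ and on the fact that $\triangle_j$ separates its interior from the rest of $T$, which makes the $h-2$ sub-problems completely independent and allows us to reuse the machinery of Section~\ref{sec:GraphOfFlips} black-box.
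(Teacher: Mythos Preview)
Your proof is correct and takes essentially the same approach as the paper: apply Theorem~\ref{thm:connectedness} independently inside each 3-cycle $\triangle_j$ of the fan and sum the $O(i_j^2)$ flip counts to obtain $O(n^2)$. You supply more justification than the paper does---in particular the verification that each $S_j$ is a combinatorial 4-PPT with triangular outer face and that local flips lift to valid flips in $T$---but the underlying argument is identical.
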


\subsection{Step~3}

After Step~2 the combinatorial 4-PPT with outer face
$o_1,\ldots,o_h$, $h\geq 4$, contains the diagonals $o_1o_j$,
$3\leq j\leq h-1$, in its edge set and the subgraph inside
$o_1o_jo_{j+1}$, $2\leq j\leq h-1$, is a canonical combinatorial
4-PPT with outer face $o_1o_jo_{j+1}$.
So it remains to move all interior vertices into $o_1o_2o_3$.

First consider two induced neighbored three-cycles $C_j=o_1o_jo_{j+1}$ and $C_{j+1}=o_1o_{j+1}o_{j+2}$, $2\leq j\leq h-2$.
Let $e=o_1o_{j+1}$ be the diagonal that separates $C_{j}$ and $C_{j+1}$.
Assume that the canonical combinatorial 4-PPTs in $C_{j}$ and $C_{j+1}$ have both $e$ as the base edge.

We want to move all interior vertices of $C_{j+1}$ into $C_{j}$.
It is easy to see that we can flip $e$ (as $e$ separates two
triangular faces). This results in a combinatorial 4-PPT in $C_{j}
\cup C_{j+1}$, exemplified in
\figurename~\ref{fig:movecanonical}~(a). To move one vertex from
$C_{j+1}$ to $C_{j}$ there exists a very  simple sequence of two
flips; see {\figurename}~\ref{fig:movecanonical}~(b) and~(c).
Repeatedly applying this sequence, until all vertices from
$C_{j+1}$ are moved, results in a combinatorial 4-PPT like the one
exemplified in \figurename~\ref{fig:movecanonical}~(e). We apply
one more flip to reintroduce the diagonal $e$, and all vertices
interior to $C_{j+1}$ have been moved to~$C_{j}$.

\begin{figure}[htb]
\centering
\includegraphics{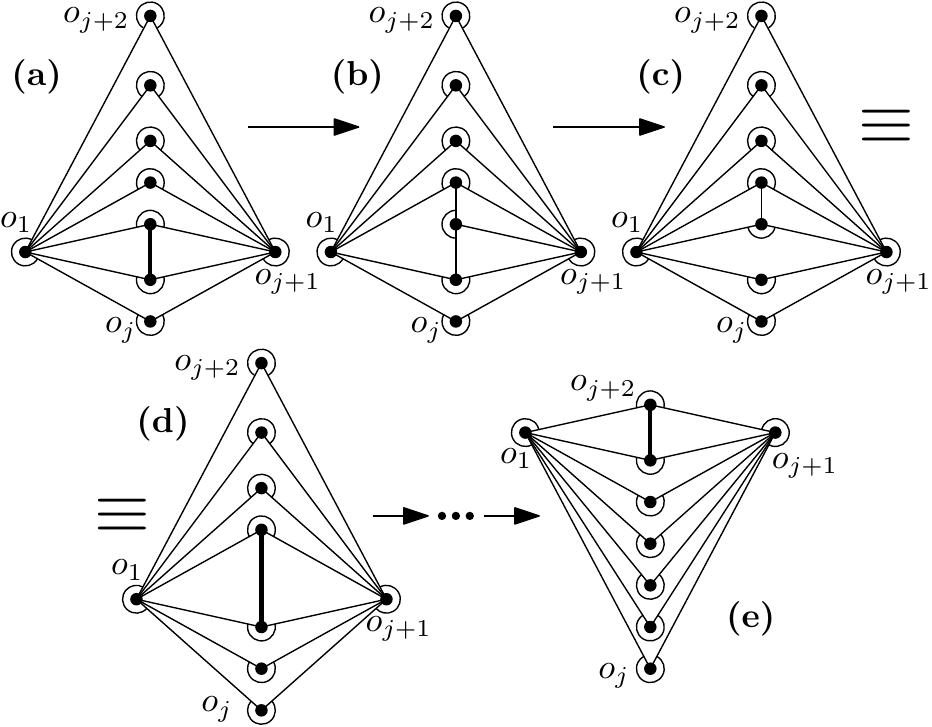}
\caption{Moving the interior vertices between two neighbored
three-cycles. (c) and (d) show different drawings of the same
graph. Comparing (a) and (d), one vertex has been moved ``down''.}
\label{fig:movecanonical}
\end{figure}

Moving all interior vertices from $C_{j+1}$ to $C_{j}$ takes $O(i_{j+1})$ flips, with $i_{j+1}$ being the number of vertices interior to $C_{j+1}$.
For moving interior vertices between neighbored three-cycles we assumed that the canonical combinatorial 4-PPTs in~$C_{j}$ and~$C_{j+1}$ both have the same base edge.
To fulfill this precondition we need a sequence of flips to rotate a canonical combinatorial 4-PPT, with triangular outer face and $i$~interior vertices, in a linear number of flips.

\begin{figure}[htb]
\centering
\includegraphics{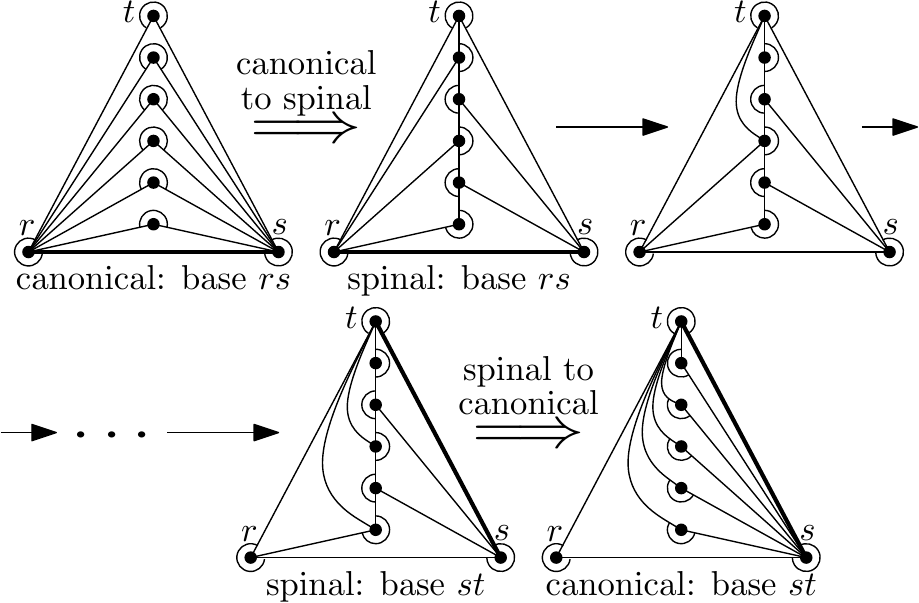}
\caption{Flip sequence for ``rotating'' a canonical combinatorial 4-PPT with triangular outer face and $i$~interior vertices.
The bold segments indicate the base edge of the canonical and spinal 4-PPT, respectively.}
\label{fig:rotatecanonicalodd}
\end{figure}

This sequence consists of:
(1) $i$~flips to obtain the spinal combinatorial 4-PPT (by Observation~\ref{obs:CanonicalSpinal});
then (2) ``rotating'' the spine with $\left\lfloor\frac{i-1}{2}\right\rfloor$~flips by flipping every other non-spinal interior edge, as depicted in \figurename~\ref{fig:rotatecanonicalodd}~(flips indicated with single arrows);
and (3) $i$~flips from spinal back to the canonical combinatorial 4-PPT with new base edge.
In \figurename~\ref{fig:rotatecanonicalodd} the whole sequence of flips is exemplified for $i$ being odd.

\begin{figure}[htb]
\centering
\includegraphics{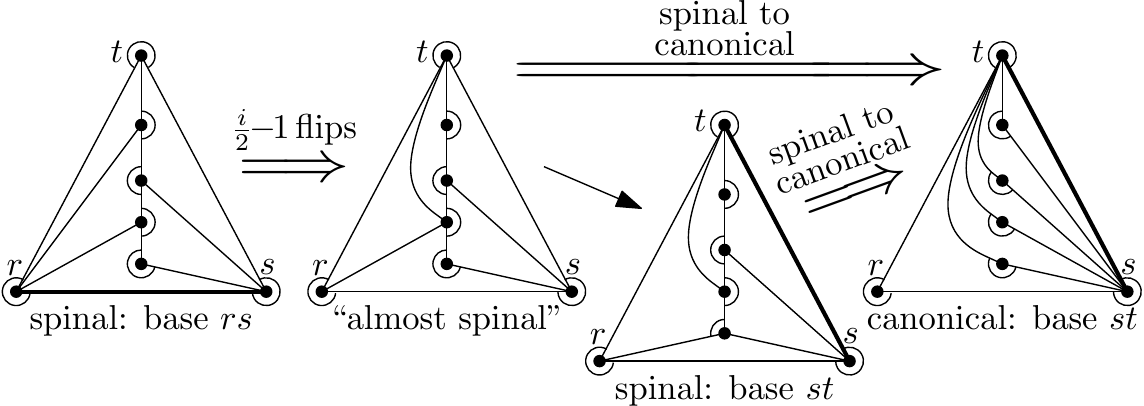}
\caption{Flip sequence for ``rotating the spine'' for an even number of $i$ interior vertices.
The bold segments indicate the base edge of the canonical and spinal 4-PPT, respectively.
The sequence of the first $\frac{i}{2}\!-\!1$ flips is the same as for $i$ odd.
The last flip to the rotated spinal 4-PPT can be eliminated, as the flip sequence from spinal to canonical can also be started from the graph named ``almost spinal''.}
\label{fig:rotatecanonicaleven}
\end{figure}

For $i$ being even, the flip sequence for ``rotating the spine'' is exemplified in \figurename~\ref{fig:rotatecanonicaleven}.
The first part of the sequence equals that for $i$ being odd.
Note that we can eliminate the last flip to the rotated spinal combinatorial 4-PPT.
The edge introduced by this last flip is the first edge that is removed in the flip sequence to the rotated canonical combinatorial 4-PPT.

Observe that we need to start with different spinal combinatorial 4-PPTs, depending on the new base edge and the parity of $i$.
If we want to rotate the canonical combinatorial 4-PPT with outer face $rst$ and base edge $rs$ to the one with base edge~$st$, then for $i$ being odd the sequence starts with flipping to the $r$-spinal combinatorial 4-PPT, and for $i$ being even the sequence starts with flipping to the $s$-spinal combinatorial 4-PPT.
See again {\figurename}~\ref{fig:rotatecanonicaleven} and {\figurename}~\ref{fig:rotatecanonicalodd}.

Starting with $j= h-2$ and stopping at $j=2$, iteratively rotating the canonical combinatorial 4-PPTs of the two neighbored three-cycles $C_j=o_1o_jo_{j+1}$ and $C_{j+1}=o_1o_{j+1}o_{j+2}$, and moving the interior vertices from $C_{j+1}$ to $C_{j}$ results in a combinatorial 4-PPT with all interior vertices inside the three-cycle $o_1o_{2}o_{3}$.
As the number of flips for the sequences of both rotating a canonical combinatorial 4-PPT (with triangular outer face) and moving the interior vertices is linear in the number of interior vertices, the overall sequence consists of $O(n^2)$ flips.

Finally, rotating the canonical combinatorial 4-PPT in $C_2=o_1o_{2}o_{3}$ to the base edge $o_1o_{2}$ can be done in another $O(n)$ flips.
This results in a (general) canonical combinatorial 4-PPT with outer face $o_1,\ldots,o_h$, $h\geq4$.

\begin{lemma}
\label{lemma:step3}
Let $T$ be a combinatorial 4-PPT with outer face $o_1,\ldots,o_h$,
$h\geq4$, such that
(1) the diagonals (of the outer face) $o_1o_{j}$, $3\leq j\leq h-1$, are in the set of edges of $T$,
and (2) the subgraph of $T$ inside a three-cycle $o_1o_{j}o_{j+1}$, $2\leq j\leq h-1$, is a canonical combinatorial 4-PPT.
There exists a sequence of $O(n^2)$ flips resulting in a (general) canonical combinatorial 4-PPT with outer face $o_1,\ldots,o_h$.
\end{lemma}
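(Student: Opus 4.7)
The plan is to directly assemble the flip sequence described in the paragraphs preceding the lemma statement, and then verify that the total flip count is $O(n^2)$. The overall strategy is to process the triangular cells of the fan from the outside inward, emptying each cell into its left neighbor, until every interior vertex has been gathered inside $C_2 = o_1 o_2 o_3$; a final rotation then fixes the base edge.

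In detail, I would iterate $j$ from $h-2$ down to $2$. At each value of $j$, the goal of the iteration is to transfer all interior vertices of $C_{j+1} = o_1 o_{j+1} o_{j+2}$ into $C_j = o_1 o_j o_{j+1}$ while preserving the canonical-inside-each-triangle invariant of the preceding cells. To be able to apply the two-flip transfer primitive of Figure~\ref{fig:movecanonical}, the canonical 4-PPTs inside $C_j$ and $C_{j+1}$ must have $o_1 o_{j+1}$ as their common base edge. If either of them does not, I would first run the ``rotate'' routine on it: by Observation~\ref{obs:CanonicalSpinal}, flip to a spinal 4-PPT in $O(i_k)$ flips (choosing the $r$-spinal or $s$-spinal variant according to the parity of $i_k$, as dictated by Figures~\ref{fig:rotatecanonicalodd} and~\ref{fig:rotatecanonicaleven}), perform the $\lfloor (i_k-1)/2 \rfloor$ alternating flips along the spine to rotate it, and then flip back to a canonical 4-PPT with the new base edge in $O(i_k)$ additional flips. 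Once both base edges agree, I would flip the shared diagonal $o_1 o_{j+1}$ (which separates two triangles, so Lemma~\ref{lemma:existanceofflips} applies) and repeatedly execute the two-flip downward transfer of Figure~\ref{fig:movecanonical}, one per interior vertex of $C_{j+1}$. A concluding flip reintroduces $o_1 o_{j+1}$, restoring the fan diagonals and leaving $C_{j+1}$ empty.

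For the counting, at iteration $j$ the two rotations and the sequence of transfers each cost $O(i_j + i_{j+1}) = O(n)$ flips, and there are $O(h) = O(n)$ iterations, for a total of $O(n^2)$ flips. After the last iteration ($j=2$), all interior vertices lie inside $C_2$ as a canonical 4-PPT, possibly with the wrong base edge; a single final application of the rotate routine inside $C_2$, costing $O(n)$ more flips, produces the desired general canonical 4-PPT with base edge $o_1 o_2$ and fan diagonals $o_1 o_j$, $3 \leq j \leq h-1$. The diagonals $o_1 o_k$ for $k \geq j+2$ are never touched during iteration $j$, so the fan structure outside the currently-processed pair persists throughout.

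The main thing to check carefully is that each elementary step genuinely is a legal flip in the sense of Lemma~\ref{lemma:existanceofflips}: namely, that face sizes remain in $\{3,4\}$, the reflex/convex tags continue to satisfy the 4-PPT conditions, and no invalid (double-edge) situation arises. For the two-flip transfer primitive and for the alternating spine rotation, both freshly introduced edges have at least one endpoint that is interior to $C_j \cup C_{j+1}$ (respectively, interior to the current triangular outer face), so the new edges cannot coincide with edges lying outside that region; within the region the planarity of the intermediate drawings of Figures~\ref{fig:movecanonical}, \ref{fig:rotatecanonicalodd} and~\ref{fig:rotatecanonicaleven} shows the flip targets are fresh. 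This invalidity check is the only genuinely delicate point; everything else is bookkeeping of the flip counts described above.
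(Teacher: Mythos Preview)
Your proposal is correct and follows essentially the same approach as the paper: iterate $j$ from $h-2$ down to $2$, rotate the canonical 4-PPTs in $C_j$ and $C_{j+1}$ so that both have base edge $o_1o_{j+1}$, flip that diagonal, apply the two-flip transfer primitive repeatedly to empty $C_{j+1}$ into $C_j$, reinsert the diagonal, and finally rotate inside $C_2$ to base edge $o_1o_2$. Your additional paragraph verifying that no double edges arise is a welcome elaboration of a point the paper leaves to the figures.
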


\subsection{General Connectivity}

Summarizing over the presented three steps and Section~\ref{sec:labeledconnectedness} about labeled vertices, we can prove the following theorem.

\begin{theorem}
\label{thm:generalconnectedness}
The graph of flips in combinatorial 4-PPTs with $n$ vertices, $h\geq3$ of them on the outer face, is connected with diameter $O(n^2)$.
This is still true for labeled combinatorial 4-PPTs with a fixed labeling on the outer face.
\end{theorem}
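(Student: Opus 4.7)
The plan is to combine the three preparatory steps of Section~\ref{sec:generalconnected} with the labeled case from Section~\ref{sec:labeledconnectedness}, using the general canonical combinatorial 4-PPT as the target of the flip sequence. Concretely, given any combinatorial 4-PPT $T$ with outer face $o_1,\ldots,o_h$, I will first invoke Lemma~\ref{lemma:step1} to obtain, in $O(n^2)$ flips, a combinatorial 4-PPT containing all fan diagonals $o_1o_j$ for $3\le j\le h-1$. Then I apply Lemma~\ref{lemma:step2} to transform, in $O(n^2)$ further flips, the interior of each three-cycle $o_1o_jo_{j+1}$ into a canonical combinatorial 4-PPT on triangular outer face. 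Finally Lemma~\ref{lemma:step3} moves, again in $O(n^2)$ flips, all interior vertices into the three-cycle $o_1o_2o_3$, producing the (unique) general canonical combinatorial 4-PPT associated with $T$. Summing, any $T$ can be flipped to this general canonical form in $O(n^2)$ flips.

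Since every individual flip is reversible (flipping the diagonal of a 4-face or the diagonal of a non-degenerate 5-face returns the previous graph, and in the degenerate 5-face case the flip is likewise its own inverse), this already proves the unlabeled part of the statement: any two combinatorial 4-PPTs $T_1,T_2$ with the same outer face can be joined by concatenating the sequence $T_1\to T_1^{\text{can}}$ with the reverse of $T_2\to T_2^{\text{can}}$, where $T_1^{\text{can}}=T_2^{\text{can}}$ since the canonical form is determined by $o_1,\ldots,o_h$ and $n$. The diameter bound is $O(n^2)+O(n^2)=O(n^2)$.

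For the labeled case, observe that throughout Steps~1--3 the outer face stays fixed, so the labels $o_1,\ldots,o_h$ remain in the same cyclic order; since the source and target are assumed to agree on the boundary labeling, both flip to general canonical 4-PPTs that coincide on the outer face and on the combinatorial structure of the interior, differing only in the labeling of the interior vertices. In the general canonical form all $n-h$ interior vertices lie inside the triangular face $o_1o_2o_3$ and form a canonical combinatorial 4-PPT with that triangular outer face and base edge $o_1o_2$. Hence Theorem~\ref{thm:labeledconnectedness} applies verbatim inside $o_1o_2o_3$, permuting the interior labels to the desired order in $O(n^2)$ flips, again without touching the outer boundary. Chaining everything, the diameter remains $O(n^2)$.

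The only delicate point I expect is verifying that the labeled flip sequence inside $o_1o_2o_3$ really is a legitimate sequence in the ambient graph: every swap from Section~\ref{sec:labeledconnectedness} only modifies edges strictly inside the triangular outer face, and in the general canonical form the interior of $o_1o_2o_3$ is cleanly separated from the rest by the fan triangles (which contain no interior vertices after Step~3), so the swaps never interact with the other three-cycles $o_1o_jo_{j+1}$. This local-ness, already emphasized in Section~\ref{sec:labeledconnectedness}, is exactly what lets us transplant Theorem~\ref{thm:labeledconnectedness} into the larger outer face without any additional cost.
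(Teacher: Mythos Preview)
Your proposal is correct and follows essentially the same approach as the paper: apply Lemmas~\ref{lemma:step1}--\ref{lemma:step3} to reach the general canonical form in $O(n^2)$ flips, use reversibility to connect any two 4-PPTs through it, and in the labeled case invoke Theorem~\ref{thm:labeledconnectedness} inside the triangle $o_1o_2o_3$ to sort the interior labels. Your extra paragraph arguing that the Section~\ref{sec:labeledconnectedness} swaps remain valid inside the ambient graph (because all interior vertices lie inside $o_1o_2o_3$ and the fan triangles are empty) is a justification the paper leaves implicit, so if anything your write-up is slightly more careful on that point.
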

\begin{proof}
Let $T_1$ and $T_2$ be two combinatorial 4-PPTs with $n$ vertices, $h\geq3$ of them on the outer face.
Following the three steps summarized in Lemmas~\ref{lemma:step1},
\ref{lemma:step2}, and~\ref{lemma:step3} results in a sequence of
$O(n^2)$ flips leading to the canonical combinatorial 4-PPTs
$T_1'$ and $T_2'$, respectively, with outer face $o_1,\ldots,o_h$
(see \figurename~\ref{fig:generalcanonical}).

In the unlabeled case $T_1'= T_2'$.
As all used flips are invertible this proves that the flip graph is connected with diameter $O(n^2)$.

In the case of labeled vertices, we can flip from $T_1'$ to $T_2'$ with $O(n^2)$ flips, by Theorem~\ref{thm:labeledconnectedness}.
Hence, the flip graph is connected with diameter $O(n^2)$ in the labeled case, too.
\end{proof}

\section{Lower Bounds}
For unlabeled graphs, we are not aware of any lower bound for the diameter of the flip graph other than the trivial linear one.
For labeled combinatorial 4-PPTs, we provide a reduction from the $\Omega(n \log n)$ lower bound for combinatorial triangulations.
Sleator, Tarjan, and Thurston~\cite{labeled_triang} prove the lower bound for the flip distance between two so-called \emph{double wheels}, which are isomorphic, but labeled differently. (A double wheel consists of a cycle of $n-2$ vertices, plus two vertices that are each connected to all vertices of the cycle.)
We show that a short flip sequence between two combinatorial 4-PPTs could be used to find a flip sequence between these two triangulations that is longer only by a constant factor.

For a given combinatorial 4-PPT $T$ with triangular outer face, let $I(T)$ be the graph obtained from the following operation:
Inside each 4-gon, add an edge from the reflex vertex to the opposite one.
We call $I(T)$ the \emph{induced triangulation} of $T$.

\begin{lemma}\label{lem_induced}
$I(T)$ is a combinatorial triangulation.
\end{lemma}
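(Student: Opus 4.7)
The plan is to check the three defining features of a combinatorial triangulation for $I(T)$: every face is a triangle, the embedding is planar, and the graph is simple. The first two are essentially immediate by construction. The outer face of $T$ is triangular by assumption; every triangular face of $T$ persists unchanged; and each $4$-face is split by its added chord into two triangles, giving face-triangularity. Since the chords are drawn in the pairwise-disjoint interiors of the $4$-faces, they introduce no crossings, so planarity is preserved.

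The substantive step is verifying simplicity, which splits into two parts: (a) no added chord is already an edge of $T$, and (b) chords from two distinct $4$-faces never share both endpoints. For (a) I would argue by contradiction. Let $F=v_1v_2v_3v_4$ be a $4$-face with reflex vertex $v_1$ and opposite vertex $v_3$, and suppose $v_1v_3$ is already an edge of $T$. Since $F$ has no interior edges, this edge lies outside $F$, bulging around either the $v_2$-side or the $v_4$-side. In either case, the $3$-cycle formed by $v_1v_3$ together with $F$'s boundary path on the opposite side---namely $v_1v_4v_3$ in the first case and $v_1v_2v_3$ in the second---encloses $F$'s interior in its bounded region by the Jordan curve theorem. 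Applying Lemma~\ref{lemma:corners-genLaman} to this $3$-cycle $H$ forces all three of its vertices to be corners of first type, each having its reflex angle in the outer face of $H$; but $v_1$'s reflex angle lies inside $F$ and hence inside $H$, a contradiction.

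For (b), pointedness implies that each vertex is the reflex vertex of at most one $4$-face, so the only obstruction to distinct chords is two $4$-faces $F,F'$ sharing a pair $\{v_1,v_3\}$ as opposite vertices with the reflex role swapped between them. Here I would invoke Theorem~\ref{theorem:stretched} to stretch $T$ to a geometric $4$-PPT. Inside a geometric $4$-pseudo-triangle, the straight segment from the reflex vertex to the opposite vertex lies in the interior of that face; so the segment $\overline{v_1v_3}$ would have to lie strictly inside both $F$ and $F'$, contradicting the disjointness of distinct face interiors.

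The main obstacle I anticipate is the Jordan-curve bookkeeping in part (a): given the two possible sides along which the hypothetical external edge $v_1v_3$ may bulge, one must correctly identify which of the two $3$-cycles has $F$'s interior in its bounded region. The symmetry between the two cases is exactly what allows the argument to go through uniformly via a single application of Lemma~\ref{lemma:corners-genLaman}, and the pointedness plus stretchability neatly dispatches the otherwise subtle issue (b).
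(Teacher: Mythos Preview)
Your proof is correct, but the paper takes a shorter and more uniform route. Rather than splitting simplicity into cases (a) and (b), the paper invokes Theorem~\ref{theorem:stretched} once to realize $T$ as a straight-line pointed pseudo-triangulation, observes that in each geometric $4$-gon the reflex vertex sees the opposite vertex so the chord can be drawn as a straight segment inside the face, and concludes that the resulting graph is geometric and hence automatically simple. This single step disposes of your (a) and (b) simultaneously. Your combinatorial argument for (a) via Lemma~\ref{lemma:corners-genLaman} is valid and pleasantly self-contained, but since you already appeal to stretchability for (b), you gain nothing by avoiding it in (a); the paper's one-shot geometric argument is more economical.
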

\begin{proof}
As each face in $I(T)$ is triangular, it remains to show that $I(T)$ is simple.
Consider $T$ to be embedded as a (straight-line) pointed pseudo-triangulation (see Theorem~\ref{theorem:stretched}).
Inside each 4-gon, we can add the edge for $I(T)$ as a straight line segment in the described way, as the reflex vertex always ``sees'' the opposite one.
The resulting graph is geometric and therefore simple.
\end{proof}

\begin{figure}[htb]
\centering
\includegraphics{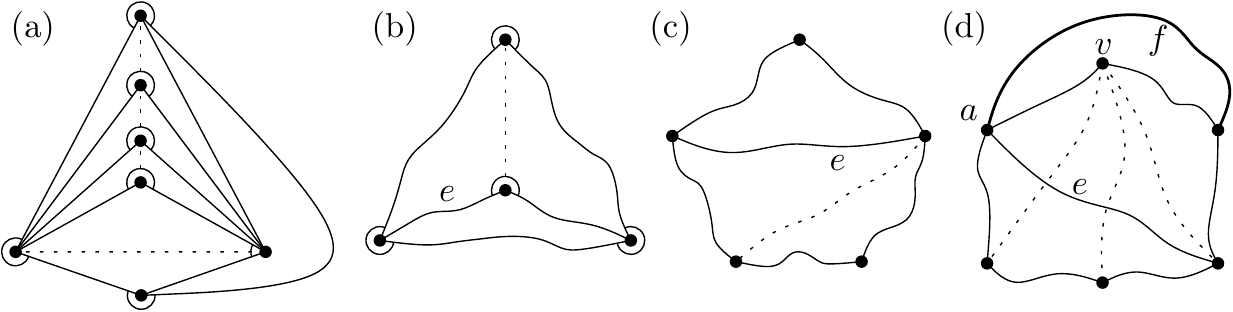}
\caption{Lower bound illustrations. (a) The source and target combinatorial 4-PPT without labels and its induced triangulation (dotted edges); when flipping the bottom-most dotted edge we obtain a double wheel. (b) No flip is necessary when the flipped edge is incident to two faces whose union is a triangle. (c) Inside a 5-gon every triangulation is a fan triangulation. (d) If not all edges can be flipped to be incident to $a$, it is because of an edge $f$ (shown bold). This edge $f$ ensures that a fan at~$v$ exists.}
\label{fig:lower_bound}
\end{figure}

\begin{theorem}
The graph of combinatorial 4-PPTs with labeled vertices has diameter $\Omega(n \log n)$.
\end{theorem}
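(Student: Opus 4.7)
The plan is to reduce from the $\Omega(n\log n)$ lower bound of Sleator, Tarjan, and Thurston on the flip distance between two labeled double-wheel combinatorial triangulations $D_1$ and $D_2$ on the same $n$-vertex set with a common triangular outer face. From each $D_i$ I would build a labeled combinatorial 4-PPT $T_i$ on the same vertex set and outer face by merging $n-3$ pairs of adjacent triangles into 4-gons with appropriate reflex-angle assignments, chosen so that the induced triangulation $I(T_i)$ agrees with $D_i$ up to at most one triangulation flip, as suggested by \figurename~\ref{fig:lower_bound}(a). The reduction then hinges on the key lemma: \emph{for every flip $T\to T'$ in the flip graph of labeled combinatorial 4-PPTs, $I(T)$ and $I(T')$ are at flip distance $O(1)$ in the flip graph of labeled combinatorial triangulations on the same vertex set with the same outer face.} Assuming this, any flip sequence $T_1\to\cdots\to T_2$ of length $k$ projects through $I$ to a flip sequence from $I(T_1)$ to $I(T_2)$ of length $O(k)$, so $\mathrm{dist}(D_1,D_2)\le c\cdot k+2$ and the SST bound forces $k=\Omega(n\log n)$.

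To prove the key lemma, I would case-analyze on the flip region $R=\triangle\cup F$, paralleling the structure of Lemma~\ref{lemma:existanceofflips}. When $F$ is triangular, $R$ is a 4-gon and the 4-PPT flip coincides with a single Lawson flip of $I(T)$, or with no triangulation flip at all in the degenerate subcase where the new edge already sits in $I(T)$ (\figurename~\ref{fig:lower_bound}(b)). When $R$ is a degenerate 5-face, the flip is forced by Lemma~\ref{lemma:existanceofflips} and direct inspection of that single reconfiguration shows $I(T)$ and $I(T')$ differ by $O(1)$ triangulation flips. The main case is when $R$ is a non-degenerate combinatorial pentagon: both $I(T)|_R$ and $I(T')|_R$ are fan triangulations of $R$, because every triangulation of a pentagon is a fan (\figurename~\ref{fig:lower_bound}(c)) and the apex is pinned to the vertex of $F$ opposite the reflex angle. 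Since the flip graph of triangulations of a pentagon is a 5-cycle, two such fans are at distance at most $2$ within $R$.

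The main obstacle is the \emph{validity} of the intermediate triangulations in this pentagonal case: an intermediate fan is forbidden as soon as one of its diagonals already appears as an edge of $T$ outside $R$, since inserting it would create a multi-edge. A short planar argument shows, however, that at most two pentagon diagonals can lie outside $R$ in $T$, and if two do then they must share a common endpoint -- otherwise they would interleave and cross in the exterior of $R$. A case check over this handful of configurations confirms that the admissible fans always form a connected subpath of the 5-cycle of diameter at most $2$ still containing both $I(T)|_R$ and $I(T')|_R$. This is precisely the content of \figurename~\ref{fig:lower_bound}(d): when an ambient edge $f$ blocks certain fans, its presence simultaneously forces a fan centred at the distinguished vertex $v$ to remain admissible and within two flips of the target. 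Setting $c$ to be the resulting uniform bound completes the key lemma and thus the theorem.
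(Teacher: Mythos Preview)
Your approach is essentially the paper's: build the induced triangulation $I(T)$, pick a labeled combinatorial 4-PPT whose induced triangulation is (one flip from) a double wheel, and show that a single 4-PPT flip changes $I(T)$ by only $O(1)$ triangulation flips, so Sleator--Tarjan--Thurston transfers. The case split (degenerate versus non-degenerate $5$-face, with the pentagon-fan observation) matches the paper's.

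Two remarks where the paper is cleaner. First, your validity discussion for the pentagon is more work than needed. The paper observes once and for all that $I(T)$ and $I(T')$ coincide outside $R=F\cup\tilde F$, and $I(T')$ is simple by Lemma~\ref{lem_induced}; hence every diagonal of $I(T')$ inside $R$ is absent from $I(T)$ outside $R$. Since any triangulation of a pentagon is a fan, one can flip from $I(T)|_R$ to $I(T')|_R$ in at most two steps, each step \emph{introducing an edge of the target}, and such edges are automatically valid by the observation above. No analysis of how many or which diagonals might live outside $R$ is required. Second, you cite \figurename~\ref{fig:lower_bound}(d) as illustrating the pentagonal obstruction, but in the paper that figure pertains to a $6$-gon case (two $4$-faces sharing $e$); with a triangular outer face there is a unique interior triangle, so the $4$-gon case you treat (``$F$ triangular'') and the $6$-gon case are both vacuous and the pentagon/degenerate-$5$-face cases already cover everything.
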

\begin{proof}
Consider the graph shown in \figurename~\ref{fig:lower_bound}~(a).
Observe that the induced triangulation of this 4-PPT is only one flip away from the double wheel.
Hence, the flip distance between two induced labeled triangulations is asymptotically the same for the double wheel and the triangulation shown in \figurename~\ref{fig:lower_bound}~(a).

To prove the theorem, we use the fact that there exist two different labelings of the double wheel such that their flip distance is $\Omega(n \log n)$, see ~\cite{labeled_triang}.
Thus, such a pair of labelings, with flip distance $\Omega(n \log n)$, also exists for the induced triangulation shown in \figurename~\ref{fig:lower_bound}~(a).
Suppose that between every pair of labeled combinatorial 4-PPTs there exists a flip sequence of length $o(n \log n)$.
We will prove that for each single flip in such a sequence, leading from a labeled combinatorial 4-PPT $T$ to a labeled combinatorial 4-PPT $T'$, there exists a sequence of flips from $I(T)$ to $I(T')$ that has constant length.
This leads to a contradiction that proves the theorem.

Let $e$ be the edge in~$T$ that is flipped to the edge $e'$ in the resulting combinatorial 4-PPT~$T'$.
Let $F$ and $\tilde{F}$ be the faces incident to $e$ in~$T$.
Note that $I(T)$ and $I(T')$ both are combinatorial triangulations due to Lemma~\ref{lem_induced}.
Further, observe that $I(T)$ and $I(T')$ are equivalent outside $F\cup\tilde{F}$.
Therefore, for each edge $f$ of $I(T')$ inside $F\cup\tilde{F}$ (in particular also for $f=e'$) there cannot exist an edge outside $F\cup\tilde{F}$ in $I(T)$ connecting the vertices of $f$.

We now distinguish four cases depending on the size of $F\cup\tilde{F}$.
If $F\cup\tilde{F}$ is a triangle (with one interior vertex), then $I(T) = I(T')$ and we do not have to perform any triangulation flip; see \figurename~\ref{fig:lower_bound}~(b).

The case that $F\cup\tilde{F}$ is a 4-gon cannot happen: If $F$ and $\tilde{F}$ were two 4-faces sharing two edges (among them $e$), then $e$ would not be flippable in $T$, as one of the resulting faces of the flip would have five vertices. The second possibility would be that $F$ and $\tilde{F}$ are two triangles, but as $T$ has only three vertices on the outer face, it has only one interior face that is a triangle.

Next, suppose that $F\cup\tilde{F}$ is a 5-gon, i.e., $F$ and  $\tilde{F}$ are a triangle and a 4-face, respectively, as shown in \figurename~\ref{fig:lower_bound}~(c).
Note that every triangulation of a 5-gon is a fan triangulation (i.e., all diagonals are incident to the same vertex).
It is well-known (see, e.g., ~\cite{sleator}) that between two triangulations, of which at least one is a fan triangulation, there exists a flip sequence such that each flip results in an edge of the target triangulation.
Hence, there exists a flip sequence of at most two flips from~$I(T)$ to~$I(T')$.

The last case is that $F\cup\tilde{F}$ is a 6-gon having $e$ and $e'$ as (crossing) diagonals, with $F$ and $\tilde{F}$ being two 4-faces.
Our goal is to obtain a fan triangulation inside $F\cup\tilde{F}$, which allows us to flip in the missing edges of~$I(T')$ one by one (recall the arguments for the 5-gon case).
We try to flip to a triangulation in which all edges inside $F\cup\tilde{F}$ are incident to some vertex~$a$ of~$e$.
If this is not possible, then there exists an edge~$f$ outside $F\cup\tilde{F}$ that prevents one edge of the fan at~$a$; see \figurename~\ref{fig:lower_bound}~(d).
In the exterior of $F \cup \tilde{F}$, $f$ separates one vertex~$v$ of the 6-gon $F\cup\tilde{F}$ from the remaining three.
We can therefore flip to a triangulation with all edges incident to~$v$ (resulting in a fan triangulation inside $F \cup \tilde{F}$).
Hence, there exists a flip sequence of length at most six between~$I(T)$ and~$I(T')$.
\end{proof}

The close resemblance of the problem to the one of combinatorial triangulations suggests that the same bounds hold for combinatorial 4-PPTs.
We expect the upper bounds to be improvable if more insight on the structure of combinatorial 4-PPTs is obtained.

\section*{Acknowledgments}

This work was initiated during the 9\textsuperscript{th} European Research Week on Geometric Graphs and Pseudo-Triangulations, held May 14--18, 2012 in Alcal\'a de Henares, Spain.
We thank Vincent Pilaud, Pedro Ramos, and Andr\'e Schulz for helpful comments.

Oswin Aichholzer and Birgit~Vogtenhuber are partially supported by the ESF EUROCORES programme EuroGIGA -- CRP `ComPoSe', Austrian Science Fund (FWF): I648-N18.
Thomas Hackl is supported by the Austrian Science Fund (FWF): P23629-N18 `Combinatorial Problems on Geometric Graphs'.
David Orden is partially supported by MICINN Project MTM2011-22792, ESF EUROCORES programme EuroGIGA - ComPoSe IP04 - MICINN Project EUI-EURC-2011-4306, Comunidad de Madrid Project S2013/ICE-2919 TIGRE5-CM, and Junta de Castilla y Le\'on Project VA172A12-2 ATLAS.
Alexander Pilz is recipient of a DOC-fellowship of the Austrian Academy of Sciences at the Institute for Software Technology, Graz University of Technology, Austria.
Maria Saumell is supported by the project NEXLIZ – CZ.1.07/2.3.00/30.0038, which is co-financed by the European Social Fund and the state budget of the Czech Republic, by ESF EuroGIGA project ComPoSe as F.R.S.-FNRS - EUROGIGA NR 13604, and by ESF EuroGIGA project GraDR as GA\v{C}R GIG/11/E023.

\bibliographystyle{abbrv}
\bibliography{bibliography}

\end{document}